\title{Binomial rings, and integral homology of complements of compact toric arrangements}
\author{Alexey G.~Gorinov, Alexander V.~Zakharov}
\date{}
\newtheorem{prop}[equation]{Proposition}
\newtheorem{lemma}{Lemma}
\newtheorem{theorem}{Theorem}
\newtheorem{cor}{Corollary}
\theoremstyle{remark}
\newtheorem{remark}{Remark}
\newcommand{\Q}{{\mathbb Q}}
\newcommand{\Z}{{\mathbb Z}}
\newcommand{\CC}{{\mathbb C}}
\newcommand{\R}{{\mathbb R}}
\newcommand{\Ab}{{\mathrm{Ab}}}
\newcommand{\DMO}{\DeclareMathOperator}
\DMO{\Gr}{Gr}
\DMO{\ar}{\longrightarrow}
\DMO{\al}{\longleftarrow}
\DMO{\ev}{ev}
\DMO{\rk}{rk}
\DMO{\pt}{pt}
\DMO{\Sym}{Sym}
\newcommand{\mb}[1]{\mathbb{#1}}
\newcommand{\os}[2]{\overset{#1}{#2}}
\DMO{\mmod}{\mbox{-}\mathrm{mod}}
\DeclareMathOperator{\Bin}{\mathrm{Bin}}
\DeclareMathOperator{\MV}{\mathrm{MV}}
\DeclareMathOperator{\colim}{\mathrm{colim}}
\DMO{\zgqis}{\overset{\sim}{\leftrightsquigarrow}}
\DMO{\Ch}{Ch}
\DMO{\Ext}{Ext}
\DMO{\Sets}{Sets}
\DMO{\op}{op}
\DMO{\Spaces}{Spaces}
\DMO{\Groups}{Groups}
\DMO{\Fin}{Fin}
\begin{document}

\maketitle

\begin{abstract}
An \emph{affine subtorus} of the compact torus $T=(S^1)^n$ is a translated copy of a Lie subgroup. Given a finite collection $T_1,\ldots, T_k$ of such subtori, and a prime $p$, we describe an explicit chain complex that calculates the group $H_*(T-\bigcup_{i=1}^k T_i,\mathbb{Z}_{(p)})$. 
%The complex is determined by the integral homology maps induced by the inclusions $T_J\subset T_I$ where $I\subset J\subset\{1,\ldots, k\}$ and $T_I$ denotes $\bigcap_{i\in I} T_i$. 
Our main tool is the binomial models for spaces constructed by T.~Ekedahl. We use these results to express the groups $H_*(T-\bigcup_{i=1}^k T_i,\mathbb{Z})$. We also show that the Mayer-Vietoris spectral sequence that converges to the homology of $T-\bigcup_{i=1}^k T_i$ collapses at the second page rationally, and also integrally under some assumptions on the arrangement $T_1,\ldots, T_k$, with all extension problems being trivial in the latter case.
\end{abstract}

\section{Introduction}

There are quite a lot of results in the literature about the topology of the complement $T-\bigcup_{i=1}^k T_i$ where $T$ is the complex algebraic torus $(\CC^*)^n$, and $T_1,\ldots, T_k$ are (possibly translated) complex algebraic subgroups. 

For example, for any arrangement of subtori $T_i$, L.~Moci and R.~Pagaria in \cite{moci_pagaria}
described the homotopy type of the one point compactification of $\cup_i T_i$.
Moreover, there is an automorphism of $T$ that takes each $T_i$ to a subgroup which is closed in $\CC^n\supset (\CC^*)^n=T$. This observation allows one to apply the Alexander duality to recover the additive structure of the integral cohomology of $T-\cup T_i$. As a corollary, the authors deduce the degeneration of the integral Leray spectral sequence of the open embedding $T-\cup_i T_i\subset T$ at $E_2$. This gives one the multiplicative structure of $H^*(T-\cup_i T_i)$ up to passing to the associated graded ring.
There are more precise results on the rational homotopy type of $T-\cup_i T_i$ by C.~De Concini and G.~Gaiffi \cite{dcg1},
and on the integral cohomology in the case of a codimension one arrangement by F.~Callegaro and E.~Delucchi~\cite{cd}, and by F.~Callegaro, M.~D’Adderio, E.~Delucchi, L.~Migliorini, and R.~Pagaria~\cite{cdadmp}. It is unclear to us though if these results allow one to compute $H^*(T,\cup_i T_i)$.

\iffalse
\bigskip

{\bf Shouldn't we elaborate the following approach?}
One can show that the terms of ${}_{Leray}E_2$ are
isomorphic to the Mayer-Vietoris spectral sequence terms ${}_{MV}E_1$ associated to the intersection poset of $T_i\subset T$.
The advantage of the latter is that the construction of ${}_{MV}E_1\implies H^*(T-\cup_i T_i)$ is {\it Verdier dual} to another spectral sequence ${}_{MV}E'_1\implies H^*(T,\cup_i T_i)$. In particular the degeneration of ${}_{MV}E_1$ is equivalent to the degeneration of ${}_{MV}E'_1$ ({\bf Why? Does it make sense?}).
Applying this result to the complexification of real compact tori arrangement one gets 
${}_{MV}E'_1={}_{MV}E'_{\infty}$
in arbitrary real compact case.
Note that ${}_{MV}E'_1$ is isomorphic to the {\it cubical} Mayer-Vietoris term $E_2$.
In particular $E_2=E_{\infty}$.

\bigskip
\fi
\iffalse
For example, if all $T_i$ are of complex codimension $1$ in $T$, there is a description of the integral cohomology ring $H^*(T-\bigcup_{i=1}^k T_i,\mathbb{Z})$, see~\cite{cdadmp,cd} and references therein. The rational homotopy type of $T-\bigcup_{i=1}^k T_i$ was described in~\cite{dcg1}. There is an algorithm to compactify $T-\bigcup_{i=1}^k T_i$ so that the complement is a normal crossing divisor with smooth components~\cite{dcg}.
\fi
Suppose however that $T$ is the compact torus $T=(S^1)^n$, and $T_1,\ldots, T_k\subset T$ is a finite collection of (possibly translated) Lie subgroups. This is the notation we will use in the rest of the paper. If $T$ and all $T_i$ are complex algebraic (and each $T_i\subset T$ is a closed embedding of complex algebraic varieties), then one can give a model for the rational homotopy type of $T-\bigcup_{i=1}^k T_i$, see A.~Zakharov~\cite{sasha_arxiv}. Also, the rational Betti numbers were calculated in some cases by P.~Kalugin in~\cite{kalugin}. This completes the list of the known results that we are aware~of.

Our goal in this note is to calculate the homology groups of $T^n-\bigcup_{i=1}^k T_i$ with coefficients in $\Z_{(p)}$ and~$\Q$, see Theorems~\ref{main},~\ref{main1},~\ref{rational}, and~\ref{main2}. The rational case (Theorem~\ref{rational}) is a practice example: it uses only the classical Mayer-Vietoris spectral sequence, but it also shows one what to look for when the coefficients are more complicated. To handle the $\Z_{(p)}$ case (Theorem~\ref{main}) we use cosimplicial binomial algebras. We do not know how to solve the problem using more standard methods.

For general results on modelling homotopy types by binomial algebras, and the relation to Grothendieck's schematisation programme we refer the reader to the papers~\cite{ekedahl} by T.~Ekedahl and~\cite{toen} by B.~To\"en, as well as to later work by G.~Horel~\cite{horel} and B.~Antieau~\cite{antieau}. As an illustration, the singular cochains of Abelian Eilenberg-Maclane spaces are free derived binomial algebras.
Moreover, the cochains functor taking values in derived binomial algebras provides a fully faithful embedding of the homotopy category of nilpotent spaces.
It is worthwhile to compare this with a well-known result by M.~Mandell, who showed in~\cite{mandell} that the cochain functor with values in $E_\infty$-algebras (and even derived commutative algebras) over the integers provides only a faithful embedding. 
%Modelling homotopy types by binomial algebras in relation to Grothendieck's schematisation programme began in the papers~\cite{ekedahl} by T.~Ekedahl and~\cite{toen} by B.~To\"en.
%The role of binomial algebras is evident from the fact that 
The case of a nilpotent $K(\pi,1)$ was studied in~\cite{suciu} by R.~Porter and A.~Suciu. In this note we apply the binomial model of the simplest possible Eilenberg-MacLane space, namely the torus, to complements of arrangements.
%In this work we focus on applications to the simplest case of tori.
%We do not know how to solve the problem using more standard methods.

%Theorem~\ref{main}, together with a bound on the set of the torsion primes (Proposition~\ref{bound_on_torsion}), gives one the integral homology of $T^n-\bigcup_{i=1}^k T_i$. Finally, 

Theorem~\ref{main1} provides a somewhat simpler answer than Theorem~\ref{main} for homology with coefficients in a general binomial ring $R$ (e.g.\ in $R=\Z$) assuming all intersections of the $T_i$'s are path connected. Assume in addition that the functor $I\mapsto H_1(\bigcap_{i\in I} T_i,\Z)$ from the subsets of $\{1,\ldots, k\}$ to free Abelian groups lifts to a functor to free groups. We prove then (Theorem~\ref{main2}) that the first page of the Mayer-Vietoris spectral sequence already gives one the answer, same as in the rational case (Theorem~\ref{rational}).

The cochain complexes given by Theorems~\ref{main} and~\ref{main1} have infinite rank, but it is easy enough to find appropriate truncations which result in finite rank complexes with the same homology, see Corollaries~\ref{finite_rank_thm1} and~\ref{finite_rank_thm2} respectively. Combining Corollary~\ref{finite_rank_thm1} (or Theorem~\ref{main}) with a bound on the set of the torsion primes (Proposition~\ref{bound_on_torsion}) one gets the integral homology of $T^n-\bigcup_{i=1}^k T_i$ for an arbitrary arrangement of affine subtori of $T$.

We would also like to mention that the homology groups of the hull of a quasi-periodic structure in the theory of quasi-crystals studied by J.~Bellissard~\cite{bellissard} and P.~Kalugin~\cite{kalugin} are isomorphic to those of $T^n-\bigcup_{i=1}^k T_i$ for some choice of $T$ and $T_1,\ldots, T_k$. This fact was the initial motivation behind this project.

\smallskip

{\bf Organisation of the note.} In Section~\ref{results} we state our main results, which we then prove in Sections~\ref{proofs} and~\ref{more_proofs}.

\smallskip

{\bf Acknowledgements.} We are grateful to Pyotr Pushkar and to Pavel Kalugin for informing us about the problem, respectively for explaining its applications to physics. The second named author is grateful to Dmitry Kubrak, Pierre Godfard and Grigory Solomadin for stimulating discussions about binomial algebras and toric diagrams. 

\section{Main results and some comments}\label{results}

\subsection{Notation and main results}

We will now review the ingredients needed in our main result. 

{\bf Some notation and coventions.} We let $\triangle$ denote the \emph{simplex category} (the objects are the sets $[n]=\{0,\ldots,n\}, n\in\Z_{\geq 0}$, and the morphisms are the order preserving maps). Also, let $\square_k$ be the \emph{$k$-cube category} (the objects are the subsets of $\{1,\ldots, k\}$, and the morphisms are the inclusions). If $\mathcal{C}$ is a category, functors $\triangle^{op}\ar\mathcal{C}$, respectively $\triangle\ar\mathcal{C},\square_k^{op}\ar\mathcal{C}$, and $\square_k\ar\mathcal{C}$ are called \emph{simplicial}, respectively \emph{cosimplicial}, \emph{cubical}, and \emph{cocubical} objects of $\mathcal{C}$. If $\mathcal{C},\mathcal{D}$ are categories, we denote the category of functors $\mathcal{C}\ar\mathcal{D}$ by $[\mathcal{C},\mathcal{D}]$. 

All rings below will be assumed associative and with identity. All ring maps are assumed to take $1$ to $1$. Let $A$ be a commutative ring. We denote the category of $A$-modules by $A\mbox{-}\mathrm{mod}$. We will abbreviate $\Z\mbox{-}\mathrm{mod}$ to $\Ab$. We will write $C^{\geq 0}(A)$ to denote the category of non-negatively graded cochain complexes of $A$-modules.

If $H$ is an Abelian group, we set $H[i]$ to be $H$ placed in degree $-i$ and viewed as a cochain complex, and we will sometimes write $H$ instead of $H[0]$.

Suppose $X_1,\ldots, X_k\subset T$ is a family of subspaces. For a non-empty $I\subset\{1,\ldots, k\}$ we set $X_I:=\bigcap_{i\in I} X_i$, and we define $X_\varnothing:=T$.

\smallskip

{\bf Binomial monad.} Recall that a commutative ring $R$ is called \emph{binomial} if the underlying Abelian group is torsion free, and for every $x\in R$ and $l\in\Z_{>0}$, the binomial coefficient $\binom{x}{l}:=\frac{x(x-1)\cdot\cdots\cdot (x-l+1)}{l!}\in R\otimes\Q$ in fact belongs to $R$. Examples of binomial rings include $\Z$ localised at any (possibly empty) set of primes, and any $\Q$-algebra. Binomial rings (and arbitrary ring maps) form a bicomplete category. The forgetful functor from binomial rings to Abelian groups has a left adjoint, which we will denote $\Bin$. Viewed as a functor $\Ab\ar \Ab$, $\Bin$ is a monad.

The free binomial ring $\Bin(\Z^m)$ on $\Z^m$ is the ring of the polynomials in $\Q[t_1,\ldots,t_m]$ that take integer values at integer points; as a ring, $\Bin(\Z^m)$ is generated by the binomial coefficients $\binom{t_i}{l}=\frac{t_i(t_i-1)\cdot\cdots\cdot (t_i-l+1)}{l!}$ for $i\in\{1,\ldots,m\}$ and $l\in\Z_{\geq 1}$. We extend $\Bin$ object-wise to a monad $$\Bin:[\triangle,\Ab]\ar [\triangle,\Ab]$$ on the category $[\triangle,\Ab]$ of cosimplicial Abelian groups, which is equivalent to $C^{\geq 0}(\Z)$ by the Dold-Kan correspondence, giving us a monad $$\Bin:C^{\geq 0}(\Z)\ar C^{\geq 0}(\Z).$$ We will use these two versions of $\Bin$ interchangeably.

Slightly generalising the above, suppose $A$ is a binomial ring. A \emph{binomial algebra} over $A$, or a \emph{binomial $A$-algebra} is a binomial ring $R$ equipped with a map of rings $A\ar R$. The forgetful functor from binomial $A$-algebras to $A$-modules has a left adjoint, denoted $\Bin_A(-)$, which is a monad when viewed as an endofunctor of $A\mbox{-}\mathrm{mod}$. We have for example $\Bin_A(A^m)=\Bin(\Z^m)\otimes_\Z A$. Same as for $A=\Z$, the category of binomial $A$-algebras is bicomplete. Repeating verbatim the above construction we get a monad, which we denote $\Bin_A$, on the category of cosimplicial $A$-modules, or equivalently, on $C^{\geq 0}(A)$.

We refer the reader to~\cite{ekedahl,elliot, horel, ksz} for more information about binomial algebras.

\smallskip

{\bf A cochain complex $A^*$.} 
\label{construction_A}
Let us identify $T=\R^n/\Z^n$, and let $pr:\R^n\ar T$ be the projection. For a \emph{connected} affine subtorus $T'\subset T$ and a subring $R\subset\R$ we will now define a cochain subcomplex $A^*(T',R)$ of $R$-modules of the de Rham algebra $\Omega^*(U)$ where $U$ is a tubular neighbourhood of $T'$.  We will write $\langle m_1,\ldots, m_l\rangle$ to denote the $R$-submodule of an $R$-module $M$ generated by $m_1,\ldots, m_l\in M$. 

The complex $A^*(T',R)$ will be concentrated in degrees $0$ and $1$. The degree $1$ component $A^1(T',R)$ is set to be $\langle dx_1|_U,\ldots, dx_n|_U\rangle$ where $x_1,\ldots,x_n$ are the standard coordinates in $\R^n$. 
\iffalse
The degree $0$ component $A^0(T',R)$ is the $R$-module 
generated by all continuous functions $f:U\ar \R$ such that after lifting $f$ to $pr^{-1}(U)\subset\R^n$, on each path component we have
$$f(x_1,\ldots,x_n)=\sum_{i=1}^n a_i x_i + a_0$$ for some $a_0,a_1,\ldots,a_n\in R$ (which may depend on the component).
\fi 
The degree $0$ component $A^0(T',R)$ is the $R$-module 
generated by all continuous functions $f:U\ar \R$ such that after lifting $f$ to $pr^{-1}(U)\subset\R^n$, on each path component we have
\begin{equation}\label{eq:affine} 
f=\sum_{i=1}^n a_i x_i + a_0
\end{equation} for some $a_0,a_1,\ldots,a_n\in R$ and $f$ is constant along the subspace over $T'$ (here $a_0$ may depend on the component).

Observe that $A^1(T',R)$ is canonically identified with $\langle dx_1,\ldots,dx_n\rangle$, while $A^0(T',R)$ is non-canonically isomorphic to $\langle 1,f_1,\ldots, f_{n-\dim T'}\rangle$ where $f_1,\ldots, f_{n-\dim T'}:\R^n\ar \R$ are linear functions with integer coefficients that define the tangent space 
$T_{T'}\subset \R^n$. The latter makes sense since the vector subspace $T_{T'}$ is automatically defined over $\Z$: the kernel of the exponential map from $T_{T'}$ to $T'$ is a natural sublattice. 
The element $1\in A^0(T',R)$ is well defined, and by the connectivity of $T'$ its class spans $H^0(A^*(T',R))$.

%If $T'\subset T$ is a disjoint union of affine subtori, we set $A^*(T',R)$ to be the direct sum of $A^*(T'',R)$ for all path components $T''\subset T'$.
\begin{remark}\label{rmrk:B_construction}
Note that along with $A^*(T',R)$ one can consider the functors $\tau_{\le 1}C^*_{\mathrm{sing}}(T',R)$ and $H^0(T',R)\oplus H^1(T',R)[-1]$. The latter might be viewed as the ``associated graded'' version of the former with respect to the canonical filtration.
In the sequel we will see that $A^*(T',R)$ is a manageable model of $\tau_{\le 1}C^*_{\mathrm{sing}}(T',R)$.
\end{remark}
\smallskip

{\bf A binomial algebra.} Let us now assume that $R$ is binomial. Let $A^*\in C^{\geq 0}(R)$, and suppose we have a \emph{coaugmentation} of $A^*$, i.e.\ a cochain map $R\ar A^*$ where $R$ on the left lives in degree 0. We set $B(A^*)$ to be the pushout of (cosimplicial) binomial algebras in the following diagram:
%For arbitrary affine subtorus $T'\subset T$, given the construction $A$ or alike, we define a cosimplicial binomial algebra $B(T',A(-,R))$.
%If $T'$ is connected it is the pushout
\begin{equation}\label{eq:bin_pushout}
\begin{tikzcd}
\Bin R \arrow[dr, phantom, "\lrcorner", very near end]\ar[r]\ar[d] & R\ar[d]\\
\Bin(A^*)\ar[r] & B(A^*).
\end{tikzcd}
\end{equation}
Here $\Bin R$ in the top left corner is $\Bin$ applied to $R$ in degree $0$. The left vertical arrow is $\Bin$ of the coaugnemtnation $R\ar A^*$. %cochain map $R\ar A^*(T',R)$ that takes $1\in R$ to $1\in A^0(T',R)$. 
The top horizontal arrow is the Bin-algebra structure on $R$, given by evaluating $R$-valued polynomials at $1$.
%it is $R\otimes\Bin (\Z)\ar R\otimes \Z$ where the arrow $\Bin(\Z)\ar\Z$ is the evaluation at $1$.
Concretely, $B(A^*)=\Bin(A^*)\otimes_{\Bin(R)}R$, where the tensor product is the object-wise tensor product in the category of cosimplicial modules over the (constant) cosimplicial ring $\Bin(R)$. %Note that $B(A^*)$ is a binomial algebra in a natural way.
%To define the top horizontal arrow it is more convenient to use the cosimplicial version of the $\Bin$ monad: we view $R$ as a constant cosimplicial binomial algebra and take the object-wise counit of the free-forgetful adjunction.

Observe that for a connected affine subtorus $T'\subset T$ we have a natural coaugmentation $R\ar A^*(T',R)$ which takes $1\in R$ to $1\in A^0(T',R)$, so the above construction gives us a binomial algebra $B(A^*(T',R))$. For $I\subset\{1,\ldots, k\}$ we set $B(T_I,R)$ to be the direct sum of $B(A^*(T',R))$ for all path components $T'\subset T_I$.

\smallskip

{\bf A few cocubical cochain complexes.}
Let $R$ be a binomial ring. We then have the following functors $\square_k\ar C^{\ge 0}(R)$, i.e. \emph{cocubical cochain complexes} of $R$-modules:
$$I\mapsto B(T_I,R),\ I\mapsto \Bin (H^1(T_I,R)[-1]).$$ 
(We will use the second of these functors only when all $T_I$ are path connected.)

Also, for an arbitrary commutative ring $R$ we have the functor $\square_k\ar C^{\ge 0}(R)$ given by
$$I\mapsto H^*(T_I,R),$$ where we regard the right hand side as a cochain complex with zero differential.

\smallskip

{\bf Mayer-Vietoris construction.} Recall that starting from a cocubical cochain complex $\{C_I^*\}_{I\subset\{1,\ldots, k\}}$, one can construct a double complex as follows. Set $$C^{p,q}:=\bigoplus_{|I|=p} C^q_I.$$ We have the differential $d:C^{p,q}\ar C^{p,q+1}$ induced by the differentials of the individual complexes $C^*_I$. We also have the \emph{combinatorial differential} $\delta:C^{p,q}\ar C^{p+1,q}$ defined by
$$(\delta c)_{I=\{i_1<\cdots < i_{p+1}\}}:=\sum_{j=1}^{p+1} (-1)^{j+1}c_{I\setminus\{i_j\}}|_I.$$ Here $c=(c_J)\in C^{p,q}$ and $\delta c=((\delta c)_I)\in C^{p+1,q}$ where $I,J\subset\{1,\ldots, k\}, |J|=p$, and $|I|=p+1$. The restriction map $C_J^q\ar C_I^q$ in the right hand side is induced by the inclusion $J\subset I$.

The differentials $d$ and $\delta$ commute. This turns $(C^{p,q})$ into a double complex, which we will call the \emph{Mayer-Vietoris double complex} of $\{C_I^*\}$ and denote $\MV(\{C_I^*\})$. We will use the same notation for the resulting total complex.

%We will use $\tot$ to denote the total complex of a double complex. 

\smallskip

{\bf Results.} We can now state our main results.

\begin{theorem}\label{main}
Let $p$ be a prime. The complex $\MV(\{B(T_I,\Z_{(p)})\})$ computes the cohomology
$$H^*(T,\bigcup_{i=1}^k T_i,\Z_{(p)})\cong H_{\dim T-*}(T-\bigcup_{i=1}^k T_i,\Z_{(p)}).$$
\end{theorem}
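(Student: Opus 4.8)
The plan is to identify $\MV(\{B(T_I,\Z_{(p)})\})$ with a cochain complex computing $H^*(T,\bigcup_i T_i;\Z_{(p)})$ by going through a chain of quasi-isomorphisms, using the Mayer--Vietoris spectral sequence twice and Ekedahl's binomial model as the bridge. First I would recall that for an open cover — or more precisely, for the closed pieces $T_I\subset T$ assembled into a cocubical space $I\mapsto T_I$ — the Mayer--Vietoris double complex associated to a cocubical model $\{C^*_I\}$ of $I\mapsto C^*_{\mathrm{sing}}(T_I,\Z_{(p)})$ computes the cohomology of the homotopy colimit of this cocubical diagram, which is (up to the usual shift) the relative cohomology $H^*(T,\bigcup_i T_i;\Z_{(p)})$; the last isomorphism with $H_{\dim T-*}(T-\bigcup_i T_i)$ is Alexander/Poincaré--Lefschetz duality on the closed manifold $T$ and is already standard. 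So the real content is that $I\mapsto B(T_I,\Z_{(p)})$ is, objectwise and compatibly with the cocubical structure, a model for the $\Z_{(p)}$-cochains of $T_I$ in the appropriate (binomial/derived) sense, and that passing from singular cochains to this binomial model does not change what the Mayer--Vietoris total complex computes.

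The key steps, in order: (1) For a single connected affine subtorus $T'$, show that the binomial algebra $B(A^*(T',R))$ with $R=\Z_{(p)}$ is quasi-isomorphic, as a cosimplicial binomial algebra, to the Ekedahl binomial model of $T'$ — i.e.\ to the derived binomial algebra $C^*_{\mathrm{sing}}(T',\Z_{(p)})$. Here I would use that $A^*(T',R)$ is a model for $\tau_{\le 1}C^*_{\mathrm{sing}}(T',R)$ (Remark~\ref{rmrk:B_construction}), that $T'\simeq (S^1)^{\dim T'}$ is a $K(\Z^{\dim T'},1)$, and that the cochains of $K(\Z^m,1)$ form a \emph{free} derived binomial algebra on a generator in degree $1$; the pushout~\eqref{eq:bin_pushout} is precisely the recipe that freely adjoins the binomial operations to $A^*$ while keeping $H^0$ equal to $R$ (the evaluation-at-$1$ identification), so $B(A^*(T',R))$ realises exactly this free derived binomial algebra. (2) Check that the construction $T'\mapsto B(A^*(T',R))$, extended over disconnected $T_I$ by the stated direct sum, is functorial in the cocubical variable $I$ and that the restriction maps $B(T_J,R)\to B(T_I,R)$ for $J\subset I$ model the restriction maps on cochains $C^*_{\mathrm{sing}}(T_J)\to C^*_{\mathrm{sing}}(T_I)$ — this reduces to checking compatibility of the coaugmentations and of the inclusions of tangent lattices, which is routine but must be done. (3) Invoke the comparison theorem for Mayer--Vietoris total complexes: since at each vertex $I$ the map relating $\MV$-entry $B(T_I,R)$ to $C^*_{\mathrm{sing}}(T_I,R)$ is a quasi-isomorphism, the induced map of total complexes is a quasi-isomorphism (compare the two associated spectral sequences, which agree on $E_1$), hence $\MV(\{B(T_I,R)\})$ computes the same thing as $\MV(\{C^*_{\mathrm{sing}}(T_I,R)\})$, namely $H^*(T,\bigcup_i T_i;R)$. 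Finally specialise $R=\Z_{(p)}$.

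The main obstacle I expect is step~(1): making precise the sense in which $B(A^*(T',\Z_{(p)}))$ is ``the'' binomial model of $T'$ and hence computes its $\Z_{(p)}$-cohomology. The subtle point is that the binomial models of Ekedahl compute cohomology only after a suitable fibrant/cofibrant replacement in the model category of cosimplicial binomial algebras, and the pushout~\eqref{eq:bin_pushout} is a \emph{strict} (not homotopy) pushout; one must argue either that $\Bin$ of the coaugmentation $\Z_{(p)}\to A^*(T',\Z_{(p)})$ is a cofibration (so the strict pushout is a homotopy pushout), or directly compute the cohomology of $B(A^*(T',\Z_{(p)}))$ and match it with $H^*((S^1)^{\dim T'};\Z_{(p)})=\Bin(H^1(T',\Z_{(p)})[-1])$ as an exterior-type algebra with divided-power/binomial structure. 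I anticipate the cleanest route is the latter: show $H^*(B(A^*(T',\Z_{(p)})))\cong\Bin(H^1(T',\Z_{(p)})[-1])$ by an explicit computation exploiting that $\Bin$ commutes with the relevant colimits and that $A^*(T',\Z_{(p)})$ has cohomology only in degrees $0$ and $1$, and then upgrade this to the cocubical statement and feed it into the Mayer--Vietoris comparison.
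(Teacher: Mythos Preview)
Your proposal is correct and follows essentially the same route as the paper: (1) show $B(A^*(T',\Z_{(p)}))$ is a cochain model for the torus $T'$, (2) make this natural in $I\in\square_k$, (3) compare Mayer--Vietoris total complexes via the $E_1$-pages. Your anticipated ``cleanest route'' for step~(1) is exactly what the paper does: it uses a basepoint to split off the coaugmentation (so $B(A^*)\simeq\Bin(H^1[-1])$), and then the binomial filtration together with the d\'ecalage isomorphism $\Gamma^m(A[-1])\cong\Lambda^m(A)[-m]$ identifies $H^*(\Bin(A[-1]))$ with $\Lambda^*(A)=H^*(T')$ (Lemmas~\ref{lemma:bin_vs_canonical}, \ref{b_constr_qis}, \ref{splitting}, \ref{lemma:bin_model_of_torus}).

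The one point you underplay is step~(2). To run the spectral sequence comparison you need an actual map (not just an objectwise abstract equivalence) of cocubical cochain complexes $\{B(T_I,\Z_{(p)})\}\to\{C^*(T_I,\Z_{(p)})\}$, and producing one is not quite ``routine'': $A^*(T',R)$ is defined as a subcomplex of de~Rham forms on a tubular neighbourhood, not of simplicial cochains. The paper's device is to choose a triangulation of $T$ whose simplices lift to genuine affine simplices in $\R^n$ with all vertex coordinates in $\Z_{(p)}$, and to replace each $T_I$ by a closed polyhedral tubular neighbourhood $X_I$ fitting inside the $U$'s. Then the affine functions and coordinate $1$-forms in $A^*(T',\Z_{(p)})$ can be evaluated, respectively integrated, on simplices of $X'$ to give an honest cochain map $A^*(T',\Z_{(p)})\to C^*(X',\Z_{(p)})$ which is visibly natural in $I$ and lands in $\tau_{\leq 1}$. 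Without some such construction your step~(3) has nothing to apply the comparison theorem to.
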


\smallskip

If all intersections of the affine subtori that we wish to delete are path connected, then the answer simplifies:

\begin{theorem}\label{main1}
Suppose all non empty intersections $T_I,I\subset\{1,\ldots,k\}$ are path connected, and let $R$ be a binomial ring. The complex $\MV(\{\Bin (H^1(T_I,R)[-1])\})$ computes the cohomology
$$H^*(T,\bigcup_{i=1}^k T_i,R)\cong H_{\dim T-*}(T-\bigcup_{i=1}^k T_i,R).$$
\end{theorem}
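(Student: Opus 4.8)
The plan is to deduce Theorem~\ref{main1} from Theorem~\ref{main} by showing that, under the path-connectedness hypothesis, the cocubical cochain complex $I\mapsto B(T_I,R)$ is quasi-isomorphic (as a diagram over $\square_k$) to the cocubical cochain complex $I\mapsto\Bin(H^1(T_I,R)[-1])$, and then invoking the fact that $\MV(-)$ sends objectwise quasi-isomorphisms of cocubical cochain complexes to quasi-isomorphisms of total complexes (the Mayer--Vietoris double complex has bounded filtration, so the associated spectral sequence argument applies and there are no convergence issues). Granting that reduction, the theorem follows because $\MV(\{B(T_I,\Z_{(p)})\})$ computes $H^*(T,\bigcup T_i,\Z_{(p)})$ by Theorem~\ref{main}, and an analogous statement over a general binomial ring $R$ will follow once the two cocubical complexes are identified; alternatively, one reruns the proof of Theorem~\ref{main} with $\Bin(H^1(T_I,R)[-1])$ in place of $B(T_I,R)$ from the outset.

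The core of the argument is therefore a local, objectwise statement: for a \emph{connected} affine subtorus $T'\subset T$, the coaugmented cochain complex $A^*(T',R)$ is a model for $\tau_{\le 1}C^*_{\mathrm{sing}}(T',R)$ (as promised in Remark~\ref{rmrk:B_construction}), and when $T'$ is connected its cohomology is simply $H^0(T',R)\oplus H^1(T',R)[-1]$ with $H^0=R$ concentrated in degree $0$. The first step is to check that the coaugmentation $R\to A^*(T',R)$ is a quasi-isomorphism onto $H^0\oplus H^1[-1]$ in the derived category, i.e.\ that $A^*(T',R)$ is formal; since $A^*(T',R)$ is concentrated in degrees $0$ and $1$ and the differential $d:A^0\to A^1$ is the de Rham differential, which kills constants and is injective on the complement of constants, this is immediate — $A^*(T',R)$ is \emph{already} the direct sum of its cohomology, so formality is on the nose rather than up to homotopy. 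The second step is to propagate this through the pushout~\eqref{eq:bin_pushout} defining $B$: because $\Bin$ (as a monad on cosimplicial $R$-modules, equivalently on $C^{\ge 0}(R)$) preserves quasi-isomorphisms between cofibrant/degreewise-free objects and the pushout in~\eqref{eq:bin_pushout} is a homotopy pushout (the left vertical map $\Bin R\to\Bin(A^*)$ is a cofibration of cosimplicial binomial algebras), applying $B(-)$ to the quasi-isomorphism $A^*(T',R)\simeq H^0\oplus H^1(T',R)[-1]$ of coaugmented complexes yields a quasi-isomorphism $B(A^*(T',R))\simeq B(H^0\oplus H^1(T',R)[-1])$; and by connectivity of $T'$ the latter is exactly $\Bin(H^1(T',R)[-1])$, since $B$ applied to $H^0=R$ in degree $0$ with its canonical coaugmentation returns $R$, and tensoring $\Bin(R\oplus H^1[-1])\cong\Bin(R)\otimes_{\text{(object-wise)}}\Bin(H^1[-1])$ over $\Bin(R)$ with $R$ (via evaluation at $1$) collapses the $\Bin(R)$ factor.

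The third step is to make all of this \emph{functorial in $I$}: one must verify that the quasi-isomorphisms $B(T_I,R)\xrightarrow{\ \sim\ }\Bin(H^1(T_I,R)[-1])$ are compatible with the cocubical restriction maps induced by inclusions $I\subset I'$. This is where one has to be slightly careful: the restriction maps on $A^*(T_I,R)$ come from restricting differential forms and affine functions from a tubular neighbourhood of $T_I$ to one of $T_{I'}$, and one needs that under the (non-canonical) identification $A^0(T',R)\cong\langle 1,f_1,\ldots,f_{n-\dim T'}\rangle$ these restrictions are exactly the maps classifying $H^1(T_I,R)\to H^1(T_{I'},R)$; this follows from naturality of the de Rham/singular comparison and the fact that the splitting $A^*=H^0\oplus H^1[-1]$ is canonical here (the $H^0$ summand is spanned by $1$, the $H^1$ summand is the image of $d$ composed with nothing, i.e.\ it is $A^1$ itself modulo the relation identifying exact and closed $1$-forms — all closed, none exact beyond $0$). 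So the splitting is natural, hence so is everything obtained from it by applying $\Bin$ and the pushout $B$. The main obstacle, and the step deserving the most care, is precisely this naturality together with the homotopy-invariance of the pushout~\eqref{eq:bin_pushout} — i.e.\ justifying that $B(-)$ is a \emph{derived} functor on coaugmented complexes and hence transports the objectwise formality quasi-isomorphism; once that is in hand, feeding the resulting objectwise quasi-isomorphism of cocubical complexes into $\MV(-)$ and citing Theorem~\ref{main} finishes the proof (and the same computation over a general binomial $R$, not just $\Z_{(p)}$, gives the stated generality, since the path-connected hypothesis removes the only place where $p$-local subtleties entered).
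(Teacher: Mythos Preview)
There are two genuine gaps in your argument.

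\textbf{First, the claim that $A^*(T',R)$ is ``already the direct sum of its cohomology'' is false.} When $\dim T'<n$, the complex $A^*(T',R)$ has $\rk A^0=n-\dim T'+1$ and $\rk A^1=n$, whereas $H^0\oplus H^1[-1]$ has ranks $1$ and $\dim T'$. What is true is that each $A^*(T',R)$ is \emph{quasi-isomorphic} to its cohomology, but a choice is needed: one must split off the constants $R\cdot 1\subset A^0$ by choosing a complement. Evaluation at a point $x\in T'$ does this, and then $A^*\cong R[0]\oplus[V\hookrightarrow A^1]$ with the right summand mapping onto $H^1[-1]$. The crux is that for the splitting to be \emph{natural in $I$} one needs a single point $x$ lying in every $T_I$ simultaneously; path-connectedness (hence non-emptiness) of $T_{\{1,\ldots,k\}}$ is exactly what guarantees such an $x$ exists. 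Your justification that ``the $H^1$ summand is the image of $d$ composed with nothing'' confuses the quotient $H^1=A^1/dA^0$ with a subspace, and so does not address the coherence issue, which is the actual content of the hypothesis.

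\textbf{Second, routing through Theorem~\ref{main} only yields the result for $R=\Z_{(p)}$.} The proof of Theorem~\ref{main} requires a triangulation of $T$ whose vertices have $\Z_{(p)}$-coordinates in order to define the comparison map $A^*(T',\Z_{(p)})\to C^*(X',\Z_{(p)})$; this is not a path-connectedness issue and does not disappear under your hypothesis. Your suggestion to ``rerun the proof'' is the right instinct, but one must say with what: the paper's proof of Theorem~\ref{main1} bypasses both $A^*$ and Theorem~\ref{main} entirely. Having chosen $x\in T_{\{1,\ldots,k\}}$, it works directly with simplicial cochains and exhibits a functorial zig-zag
\[
\Bin_R\bigl(H^1(T_I,R)[-1]\bigr)\;\longleftarrow\;\Bin_R(\ker\ev_x)\;\longrightarrow\;\Bin_R\bigl(\tau_{\le 1}C^*(T_I,R)\bigr)\;\longrightarrow\;B\bigl(\tau_{\le 1}C^*(T_I,R)\bigr)\;\longrightarrow\;C^*(T_I,R),
\]
each arrow a quasi-isomorphism by Lemmas~\ref{left_quillen}, \ref{splitting}, and~\ref{lemma:bin_model_of_torus}. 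This works over an arbitrary binomial ring $R$ because no special triangulation is needed; one then finishes with spectral sequence~(\ref{spseq}) and Lemma~\ref{standard_complexes}. Your plan can be repaired along these lines, but as written it neither identifies the correct role of the common basepoint nor escapes the $\Z_{(p)}$-restriction.
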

\iffalse
\begin{proof}
The connectedness implies that one can choose a point $t_I$ in each non empty $T_I$ such that $t_I=t_J$ if $J<I$.
On the other hand a point in $T_I$ immediately provides a splitting $A(T_I,R)\simeq \Gr^{\bullet}_{\tau}A(T_I,R)$. The latter is naturally quasi-isomorphic to $H^0(T_I,R)\oplus H^1(T_I,R)[-1]$, hence the claim.
\end{proof}
\fi
\smallskip

Everything simplifies even further if we are instead interested in homology with coefficients in a $\Q$-algebra:

\begin{theorem}\label{rational}
Let $R$ be a $\Q$-algebra. The complex $\MV(\{H^*(T_I,R)\})$ computes the cohomology
$$H^*(T,\bigcup_{i=1}^k T_i,R)\cong H_{\dim T-*}(T-\bigcup_{i=1}^k T_i,R).$$
\end{theorem}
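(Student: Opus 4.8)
The plan is to deduce Theorem~\ref{rational} from the classical Mayer--Vietoris spectral sequence together with the fact that over a $\Q$-algebra the torus $T_I$ is formal. First I would recall the setup: delete the subtori one at a time, or rather use the cover of a suitable compactification/pair, to obtain the Mayer--Vietoris spectral sequence whose $E_1$ page is the cocubical cochain complex $I\mapsto C^*(T_I,R)$ assembled into the double complex $\MV(\{C^*(T_I,R)\})$, converging to $H^*(T,\bigcup_i T_i,R)$ (equivalently, via Poincar\'e--Lefschetz duality on the compact manifold $T$, to $H_{\dim T-*}(T-\bigcup_i T_i,R)$). This is entirely standard; the only point to be careful about is that the indexing cube $\square_k$ and the orientation/sign conventions for $\delta$ match those in the statement, and that $T_\varnothing=T$ contributes correctly.

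The key step is then a quasi-isomorphism of cocubical cochain complexes
\begin{equation}\label{eq:formality}
\{C^*(T_I,R)\}_{I\subset\{1,\ldots,k\}}\ \zgqis\ \{H^*(T_I,R)\}_{I\subset\{1,\ldots,k\}},
\end{equation}
natural in $I$, where the right-hand side carries zero differential. Since each $T_I$ is a disjoint union of tori and $R\supset\Q$, each individual complex $C^*(T_I,R)$ is formal: the subalgebra of the de Rham--Sullivan forms spanned by the translation-invariant forms (or, combinatorially, the image of $H^*$ under a choice of representing cocycles) gives a zig-zag of quasi-isomorphisms. The real content is making this choice compatible with all the restriction maps $T_I\hookrightarrow T_J$ for $J\subset I$ simultaneously. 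For this I would use the invariant-forms model directly: for a connected affine subtorus $T'\subset T$, the invariant $R$-de Rham forms $\Lambda^*(T')$ form a sub-cdga of $\Omega^*(T';R)$ which is quasi-isomorphic to $\Omega^*(T';R)$ and has zero differential, hence is canonically $H^*(T';R)$; and the restriction of invariant forms along $T_I\subset T_J$ is again by invariant forms, so the assignment $I\mapsto \Lambda^*(T_I)$ (summed over path components, handling $\pi_0$ and basepoints as in the remark following Theorem~\ref{main1}) is literally a cocubical cdga with zero differential, receiving a cocubical quasi-isomorphism from $I\mapsto \Omega^*(T_I;R)$, which in turn is cocubically quasi-isomorphic to $I\mapsto C^*(T_I;R)$ by the de Rham theorem over $\Q$-algebras. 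This is where the hypothesis $R\supset\Q$ is essential and where the argument genuinely differs from the $\Z_{(p)}$ case.

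Finally I would invoke the standard fact that a levelwise quasi-isomorphism of cocubical cochain complexes induces a quasi-isomorphism on the associated Mayer--Vietoris total complexes: the combinatorial filtration by $p=|I|$ is bounded (since $p\le k$), its spectral sequence has $E_1^{p,q}=\bigoplus_{|I|=p}H^q$ of the respective levels, and \eqref{eq:formality} is an isomorphism on these $E_1$ pages compatibly with the $\delta_1$ differential, so it is an isomorphism from $E_2$ onward and hence on abutments. Combining with the Mayer--Vietoris spectral sequence of the first paragraph and the duality isomorphism gives
$$\MV(\{H^*(T_I,R)\})\ \zgqis\ \MV(\{C^*(T_I,R)\})\ \simeq\ C^*(T,\bigcup_i T_i;R),$$
which is the claim. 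The main obstacle is the second step---promoting the formality of each $T_I$ to a single natural (cocubical) formality quasi-isomorphism---and the invariant-forms model is what makes it painless here; no extension problems arise because the right-hand complex in \eqref{eq:formality} has zero internal differential, so $\MV(\{H^*(T_I,R)\})$ is computed by $\delta$ alone.
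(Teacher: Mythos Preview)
Your proposal is correct and follows essentially the same route as the paper's proof: both use the invariant de Rham forms on each $T_I$ to realise a cocubical quasi-isomorphism between $\{H^*(T_I,R)\}$ and a de Rham model, and then conclude via the bounded spectral sequence~(\ref{spseq}) and Lemma~\ref{standard_complexes}. The only cosmetic difference is that the paper first reduces to $R=\R$ and works with smooth forms, whereas you work directly over a general $\Q$-algebra via rational/Sullivan forms; also note that the natural map goes $\Lambda^*(T_I)\hookrightarrow \Omega^*(T_I)$ (inclusion of invariant forms), not the other way, though this does not affect your argument since a zig-zag suffices.
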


In a sense, Theorem~\ref{rational} gives one the smallest model one could hope for. We do not know if the conclusion of the theorem remains true if $R$ is no longer assumed a $\Q$-algebra, but here is a partial result in this direction. %Recall that a complex $K^*$ in an Abelian category is \emph{formal} if it is quasi-isomorphic to $H^*(K^*)$ via a zig-zag of quasi-isomorphisms that induces the identity on the cohomology.

\begin{theorem}\label{main2}
Assume that every intersection $T_I, I\subset\{1,\ldots, k\}$ is path connected, and that the functor $I\mapsto H_1(T_I;\mb{Z})$ from $\square_k$ to $\Ab$ lifts to a functor from $\square_k$ to free groups.
%Assume also that the functor $I\mapsto A^*(T_I,R)$ is formal as an object of $[\square_k,C^{\geq 0}(R\mbox{-}\mathrm{mod})]$. (Observe that the latter is true if the arrangement is central, meaning the intersection $\bigcap_{i=1}^k T_i\neq\varnothing$.)

Then the complex $\MV(\{H^*(T_I;\mb{Z})\})$ computes the cohomology $$H^*(T,\bigcup_{i=1}^k T_i,\Z)\cong H_{\dim T-*}(T-\bigcup_{i=1}^k T_i,\Z).$$ 
\end{theorem}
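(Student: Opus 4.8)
The plan is to deduce Theorem~\ref{main2} from Theorem~\ref{main1} by showing that under the stated hypotheses the cocubical complex $\{\Bin(H^1(T_I,\Z)[-1])\}$ is quasi-isomorphic, as a cocubical object, to $\{H^*(T_I,\Z)\}$, after which the Mayer–Vietoris construction $\MV(-)$ (being built from a double complex, functorial in the cocubical input, and compatible with cocubical quasi-isomorphisms up to a spectral sequence comparison) carries one quasi-isomorphism to the other. Concretely, for a single path-connected affine subtorus $T'$ with $H_1(T',\Z)=\Z^m$, the complex $\Bin(H^1(T',\Z)[-1])=\Bin(\Z^m[-1])$ is the free derived binomial algebra on a class in cohomological degree $1$; under Dold–Kan it is the normalization of the free cosimplicial binomial ring on $\Z^m$ in degree $1$, and the content we need is that its cohomology is precisely the exterior algebra $\Lambda^*_\Z(\Z^m)=H^*(T',\Z)$, concentrated in degrees $0$ and $1$ because squares of degree-one classes vanish. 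This is the integral analogue of the classical fact that the free graded-commutative (or $E_\infty$, or derived commutative) algebra on a degree-one class has the homotopy type of an exterior algebra; for binomial algebras it is exactly the statement that the binomial symmetric power construction applied to an odd-degree generator collapses, and I expect it is either already in \cite{ekedahl} or a short computation with the bar/Koszul resolution of $\Z$ over $\Bin(\Z)$.

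The role of the freeness hypothesis --- that $I\mapsto H_1(T_I,\Z)$ lifts to a functor valued in \emph{free} groups --- is to make the cohomology of $\Bin(H^1(T_I,\Z)[-1])$ \emph{functorial} in $I$ on the nose, not merely objectwise. The restriction maps $T_J\hookrightarrow T_I$ for $J\subset I$ induce maps $H^1(T_I,\Z)\to H^1(T_J,\Z)$, i.e.\ maps $H_1(T_J)\to H_1(T_I)$, and a choice of free-group lift gives a strict functor $\square_k\to(\text{free groups})$; abelianizing and dualizing, one gets a strictly functorial family of quasi-isomorphisms $\Bin(H^1(T_I,\Z)[-1])\to H^*(T_I,\Z)$ (the map sending the derived binomial algebra to its truncation / the exterior algebra on its $H^1$). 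Thus the quasi-isomorphism upgrades to a morphism of cocubical cochain complexes, degreewise a quasi-isomorphism.

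The remaining step is homological bookkeeping: given a morphism of cocubical cochain complexes $\{C_I^*\}\to\{D_I^*\}$ that is a quasi-isomorphism for each fixed $I$, the induced map $\MV(\{C_I^*\})\to\MV(\{D_I^*\})$ on total complexes is a quasi-isomorphism. This follows by filtering both Mayer–Vietoris double complexes by the cubical degree $p$ and comparing the two spectral sequences: on the $E_1$-page one has $\bigoplus_{|I|=p}H^q(C_I^*)\to\bigoplus_{|I|=p}H^q(D_I^*)$, which is an isomorphism by hypothesis, so the map is an isomorphism from $E_1$ onward and hence on the abutments (convergence is automatic here since everything is bounded in the $p$-direction, $0\le p\le k$). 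Combining with Theorem~\ref{main1}, which identifies $H^*\bigl(\MV(\{\Bin(H^1(T_I,\Z)[-1])\})\bigr)$ with $H^*(T,\bigcup_i T_i,\Z)$, gives the claim.

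The main obstacle I anticipate is the first step --- pinning down that $\Bin(\Z^m[-1])$ has cohomology exactly $\Lambda^*_\Z(\Z^m)$ with \emph{no extra torsion} and in a way that is natural for the maps induced by $\Z^m\to\Z^{m'}$. Over $\Q$ this is immediate (a $\Q$-algebra binomial structure is no structure at all beyond being a $\Q$-algebra, so one is back in the classical situation of Theorem~\ref{rational}), but integrally the free derived binomial algebra is genuinely larger than the free graded-commutative algebra, and one must check that all the divided-power-type corrections that $\Bin$ introduces live in even degrees or cancel against the differential when the generator sits in odd degree $1$. If \cite{ekedahl} does not state this in the form needed, I would prove it by writing $B(A^*(T',\Z))$ as in~\eqref{eq:bin_pushout} and computing directly: the pushout presentation plus the known structure of $\Bin(\Z^m)$ as the ring of integer-valued polynomials should reduce everything to the observation that in a binomial ring $\binom{x}{l}$ of a "square-zero" degree-one element is again a boundary.
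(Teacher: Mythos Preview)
Your overall plan matches the paper's: reduce to Theorem~\ref{main1}, show that the cocubical complex $\{\Bin(H^1(T_I,\Z)[-1])\}$ is quasi-isomorphic as a diagram to $\{H^*(T_I,\Z)\}$, and conclude by the $E_1$ spectral-sequence comparison for $\MV$. The gap is in the middle step, and it is precisely where the free-group hypothesis does its real work.

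The computation $H^*(\Bin(H^1(T_I)[-1]))\cong\Lambda^*(H^1(T_I))$ for a fixed $I$ is not the obstacle you think it is: this is Lemma~\ref{lemma:bin_vs_canonical}, proved by the binomial-filtration spectral sequence, and that identification on cohomology is already natural in $I$ (both sides are functors of $H^1(T_I)$ and the spectral sequence is natural). So the free-group hypothesis is \emph{not} needed ``to make the cohomology functorial on the nose''. What is needed, and what you do not supply, is a \emph{chain-level} zig-zag of quasi-isomorphisms that is functorial in $I$. For a single complex with free cohomology such a zig-zag always exists, but for a diagram of complexes the individual splittings need not be compatible. Your sentence ``abelianizing and dualizing [the free-group lift], one gets a strictly functorial family of quasi-isomorphisms $\Bin(H^1(T_I,\Z)[-1])\to H^*(T_I,\Z)$'' is a non sequitur: abelianizing the lift returns $H_1(T_I)$, which is where you started, and no map has been constructed.

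The paper's mechanism is rather different from anything you sketch. The key observation (Proposition~\ref{bin''_gamma''}) is that $\Bin$ and the divided-power functor $\Gamma$, while \emph{not} naturally isomorphic on all of $\Ab$, \emph{are} naturally isomorphic on the subcategory $\Ab''$ of free abelian groups with a chosen basis and maps sending each basis element to a basis element or to zero. The free-group lift $I\mapsto G_I$ is used (Proposition~\ref{diagrams_of_class_spaces}) to build a cocubical diagram of finite based simplicial sets $S_I\simeq BG_I$; the reduced cochains $\tilde C^*(S_I)=\Z^{S_I/*}$ are then cosimplicial objects in $\Ab''$, so $\Bin(\tilde C^*(S_I))\cong\Gamma(\tilde C^*(S_I))$ functorially in $I$. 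Since $\tilde C^*(S_I)\simeq H^1(T_I)[-1]$ and d\'ecalage gives $\Gamma(H^1[-1])\simeq\Lambda^*(H^1)$, this produces the required cocubical zig-zag (Proposition~\ref{prop:gamma_bin}). In short, the free-group hypothesis lets one factor the diagram through a category on which the binomial filtration splits functorially; your proposal identifies that the hypothesis must be about functoriality but does not locate the actual mechanism.
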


\begin{remark}
In fact, one can prove a more general result. The connected components of all intersections $T_I$ form a poset $\mathcal{C}$ which refines $\square_k$ via a natural poset map $\mathcal{C}\ar \square_k$. Assume again that the functor $H_1(-,\Z)$ from $\mathcal{C}$ to Abelian groups lifts to free groups. Note that the construction $A^*(-,\Z)$ can be viewed as a diagram of 2-term complexes of shape $\mathcal{C}$. Assume that this diagram is quasi-isomorphic to $H^*(A^*(-,\Z))$ via a zig-zag of quasi-isomorphisms that induces the identity on the cohomology. (As we will see in Lemma~\ref{lemma:decomposition}, the latter hypothesis is automatic when all $T_I$ are assumed path connected.) 

Then the complex $\MV(\{H^*(T_I;\mb{Z})\})$ again computes $H_{\dim T-*}(T-\bigcup_{i=1}^k T_i,\Z)$.
\end{remark}

We prove Theorems~\ref{main}, \ref{main1} and~\ref{rational} in Section~\ref{proofs}, and Theorem~\ref{main2} in Section~\ref{more_proofs}.

\subsection{Comments}\label{sec_comm}
%
%All results stated without proof in this subsection will be proved in Section~\ref{proofs}. 

To begin with, let us see why Theorems~\ref{main} and~\ref{main1} in fact give us a finite recipe for calculating the homology groups of $T-\bigcup_{i=1}^k T_i$. (In the case of Theorems~\ref{rational} and~\ref{main2} this is clear.)

{\bf Bounding the torsion.} Theorem~\ref{main} only gives one a $p$-local answer. Let us find a set of primes which contains all primes $p$ such that $H_*(T-\bigcup_{i=1}^k T_i,\Z)$ has $p$-torsion. This can be done using the next proposition.

\begin{prop}\label{bound_on_torsion}
Let $p$ be a prime. The cardinality of the $p$-torsion subgroup of $H_*(T-\bigcup_{i=1}^k T_i,\Z)$ divides that of
$\MV(\{H^*(T_I^*,\Z)\})$.
\end{prop}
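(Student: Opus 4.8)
The plan is to compare the two Mayer–Vietoris double complexes $\MV(\{B(T_I,\Z_{(p)})\})$ (which computes $H^*(T,\bigcup T_i,\Z_{(p)})$ by Theorem~\ref{main}) and $\MV(\{H^*(T_I,\Z)\})$, and to track how much torsion can be created by passing from the former to a $p$-local model of the latter. First I would localise at $p$: since $\MV(\{H^*(T_I,\Z)\})\otimes\Z_{(p)}=\MV(\{H^*(T_I,\Z_{(p)})\})$ and the $p$-torsion of $H_*(T-\bigcup T_i,\Z)$ equals the $p$-torsion of $H_*(T-\bigcup T_i,\Z_{(p)})$ (the torsion subgroup is finite, being a subquotient of the finitely generated group $H_*$, and localisation is exact), it suffices to prove that the cardinality of the $p$-torsion of the homology of $\MV(\{B(T_I,\Z_{(p)})\})$ divides that of the homology of $\MV(\{H^*(T_I,\Z_{(p)})\})$. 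Here I would use the freeness of $H^*(T_I,\Z)$ as an Abelian group, so that $\MV(\{H^*(T_I,\Z_{(p)})\})$ is a complex of free $\Z_{(p)}$-modules, hence its total torsion is a product of the finite cyclic groups appearing in the Smith normal forms of its differentials.

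The key step is to produce, for each connected affine subtorus $T'$, a morphism of cochain complexes of $\Z_{(p)}$-modules $B(A^*(T',\Z_{(p)})) \to H^*(T',\Z_{(p)})$ (sending $B$ to its cohomology, which by connectedness is $\Z_{(p)}$ in degrees $0$ and the free module $H^1(T',\Z)\otimes\Z_{(p)}$ in degree $1$), that is a \emph{rational} quasi-isomorphism and is \emph{surjective} integrally, or at least has cokernel with no homology. Concretely, $B(A^*(T',\Z_{(p)}))=\Bin(A^*(T',\Z_{(p)}))\otimes_{\Bin(\Z_{(p)})}\Z_{(p)}$, and there is an evident augmentation $\Bin(A^*(T',\Z_{(p)}))\to A^*(T',\Z_{(p)})$ (evaluation of integer-valued polynomials), which descends to $B(A^*(T',\Z_{(p)}))\to A^*(T',\Z_{(p)})$, and then $A^*(T',\Z_{(p)})$ maps quasi-isomorphically onto its cohomology $H^*(T',\Z_{(p)})$ once we pick a basepoint in $T'$ (cf.\ Remark~\ref{rmrk:B_construction} and the deleted proof of Theorem~\ref{main1}). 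These maps assemble, over $\square_k$ and over path components, into a map of cocubical complexes and hence into a map of Mayer–Vietoris double complexes $\MV(\{B(T_I,\Z_{(p)})\}) \to \MV(\{H^*(T_I,\Z_{(p)})\})$. The point is that this map is injective on each row's cohomology up to torsion and, being built from surjections of free modules modulo finite cokernels, the long exact sequence of the cone shows the $p$-torsion of the source homology injects into (a subquotient of) the $p$-torsion of the target homology together with the torsion of the cone; a diagram chase on the spectral sequence of the double complex, or a direct count of Smith-normal-form invariants, then yields the divisibility of cardinalities.

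The main obstacle I anticipate is the last bookkeeping step: even granting a map of complexes that is a rational quasi-isomorphism, the divisibility of $p$-torsion \emph{cardinalities} of homology does not follow formally from an arbitrary such map — one needs control of the cone. So the real work is to choose the comparison map so that its mapping cone has homology whose $p$-torsion is "small" in a quantifiable sense; the cleanest route is probably to exhibit, row by row, that $B(T_I,\Z_{(p)})\to H^*(T_I,\Z_{(p)})$ is a \emph{surjective} chain map of complexes of $\Z_{(p)}$-modules with \emph{acyclic} kernel (the kernel being, essentially, the augmentation ideal of the free binomial algebra tensored down, which should be $\Z_{(p)}$-split acyclic because $\Bin$ of a free module is a polynomial-type ring whose augmentation ideal is a retract). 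Given row-wise surjectivity with acyclic kernels, the associated map of double complexes is a surjection with acyclic kernel double complex, so it induces an isomorphism on total homology — which would actually prove the stronger statement that the two homologies agree, not merely a divisibility. Since the proposition only claims divisibility, I expect the intended argument settles for less: a map whose cone has \emph{finite} homology concentrated so that a rank/Smith-invariant count gives the divisibility, and the delicate part is verifying that no "extra" torsion beyond that visible in $\MV(\{H^*(T_I,\Z)\})$ is introduced. I would handle this by reducing modulo $p$ and comparing Euler characteristics and $\mathbb{F}_p$-dimensions: $\dim_{\mathbb{F}_p} H_*(\,-\,;\mathbb{F}_p)$ of the complement is bounded by that of $\MV(\{H^*(T_I,\mathbb{F}_p)\})$ via the universal coefficient spectral sequence from $\Z_{(p)}$ to $\mathbb{F}_p$ applied to Theorem~\ref{main}, and translating this $\mathbb{F}_p$-dimension bound back into a statement about $p$-torsion cardinalities of the free-module complex $\MV(\{H^*(T_I,\Z)\})$ gives exactly the claimed divisibility.
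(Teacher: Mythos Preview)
Your route through Theorem~\ref{main} is both more elaborate than necessary and, as written, does not close. The paper's proof bypasses binomial rings entirely: it runs spectral sequence~(\ref{spseq}) for the cocubical complex $\{C^*(T_I,\Z)\}$ of simplicial cochains, so that $(E_1,d_1)$ is exactly $\MV(\{H^*(T_I,\Z)\})$ and $E_2$ is its cohomology. By the proof of Theorem~\ref{rational} the same spectral sequence over $\Q$ degenerates at $E_2$, hence each $d_r$ with $r\ge 2$ is a torsion map. A torsion differential cannot enlarge the torsion subgroup (since $\operatorname{im} d_r$ is finite and lies inside $(\ker d_r)_{\mathrm{tors}}$, one gets $|(E_{r+1})_{\mathrm{tors}}|\cdot|\operatorname{im} d_r|=|(\ker d_r)_{\mathrm{tors}}|$, which divides $|(E_r)_{\mathrm{tors}}|$), and the extensions from $E_\infty$ to $H^*$ cannot enlarge it either. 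That is the whole argument.

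The concrete gap in your plan is the final $\mathbb{F}_p$ step. A bound on $\dim_{\mathbb{F}_p}H_*(-;\mathbb{F}_p)$ only controls the \emph{number} of cyclic $p$-summands in $H_*(-;\Z)$, via the universal-coefficient identity $\dim_{\mathbb{F}_p}H_*(-;\mathbb{F}_p)=\rk H_*(-;\Z)+2\cdot(\text{number of }p\text{-cyclic summands})$ once the free ranks are matched using Theorem~\ref{rational}. It does not bound the $p$-torsion \emph{cardinality}: a single $\Z/p^N$ contributes $1$ to the summand count but $p^N$ to the cardinality, so the ``translation back'' you propose does not yield the claimed divisibility. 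Your earlier attempt fares no better: a functorial map $B(T_I,\Z_{(p)})\to H^*(T_I,\Z_{(p)})$ that is a surjection with acyclic kernel would prove Theorem~\ref{rational} over $\Z_{(p)}$, which the paper explicitly leaves open; and the ``evident augmentation'' $\Bin(A^*)\to A^*$ you invoke does not exist, since $A^*$ carries no binomial-algebra structure to evaluate into --- and in any case $A^*$ only records $H^{\le 1}(T')$, not all of $H^*(T')$.
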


We will prove this proposition in Section~\ref{proofs}.
\smallskip

{\bf Finite rank models.} Next, observe that if $T'\subset T$ is an affine subtorus and $R$ is a binomial ring, the cochain complexes $B(T',\Z_{(p)})$ and $\Bin( H^1(T',R))$ have infinite rank, one of the reasons being that $\Bin(\Z^m)$ has infinite rank as an Abelian group if $m>0$. So we want to replace each $B(T',\Z_{(p)})$ and $\Bin( H^1(T',R))$ with a finite rank cochain complex, functorially in $T'$.

%$\Bin_{\Z_{(p)}}:\Z_{(p)}\mbox{-}\mathrm{mod}\ar\Z_{(p)}\mbox{-}\mathrm{mod}$
The monad $\Bin:\Ab\ar\Ab$ has a natural \emph{binomial} filtration $(\Bin_{\leq i})_{i\in\Z_{\geq 0}}$ by subfunctors. For example, $\Bin(\Z^m)_{\leq i}$ is the additive group of all polynomials $\in\Q[t_1,\ldots, t_m]$ which take integer values at integer points and have degree $\leq i$; all groups $\Bin(\Z^m)_{\leq i}$ are free Abelian of finite rank. 

Tensoring everything by a binomial ring $R$ we get a filtration $(\Bin_{R,\leq i})_{i\in\Z_{\geq 0}}$ on $\Bin_{R}$. We extend this filtration object-wise to $\Bin_{R}:[\triangle,R\mbox{-}\mathrm{mod}]\ar [\triangle,R\mbox{-}\mathrm{mod}]$, and hence to $\Bin_{R}:C^{\geq 0}(R)\ar C^{\geq 0}(R)$.

Let $\tau_{\leq i}$ denote the canonical filtration functor on cochain complexes. 
\begin{lemma}\label{lemma:bin_vs_canonical}
Let $R$ be a binomial ring, and let $A$ be a finitely generated free $R$-module.
Then the canonical and binomial filtrations on $\Bin_R(A[-1])$ are equivalent:
for any $N\ge m$ the maps
$$\tau_{\leq m}\Bin_{R,\leq N} (A[-1])\al \tau_{\leq m}\Bin_{R,\le m}(A[-1]) \ar \Bin_{R,\le m}(A[-1])$$
are quasi-isomorphisms.
\end{lemma}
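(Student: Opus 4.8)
The plan is to compute $\Bin_{R,\le N}(A[-1])$ explicitly as a cochain complex (equivalently, the cosimplicial $R$-module to which it corresponds), and then simply read off the cohomology in each degree and compare the maps in the statement. Write $A=R^m$, so that $A[-1]$ is $R^m$ placed in cochain degree $1$, i.e.\ under Dold--Kan the cosimplicial $R$-module whose normalization is $R^m$ in degree $1$. A model for this cosimplicial module is $[n]\mapsto R^m\otimes_{\Z}\widetilde{\Z[n]}$, or more concretely the Moore cochain complex $0\to R^m\to 0\to\cdots$ with the $R^m$ in position $1$; applying $\Bin_R$ object-wise and then taking the associated cochain complex, we get $\Bin_R(A[-1])$. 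The key computational input is that $\Bin_R(R^\ell)=\Bin(\Z^\ell)\otimes_\Z R$ is, as a graded $R$-module with respect to the binomial filtration, the free $R$-module on the monomials $\prod_j\binom{t_j}{l_j}$, with $\prod_j\binom{t_j}{l_j}$ having binomial degree $\sum_j l_j$. So $\Bin_{R,\le N}$ restricts to the span of those monomials of total degree $\le N$.

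First I would observe that the functor $\Bin_R$, being defined object-wise, is exact in the simplicial direction in the relevant sense: more precisely, for a cosimplicial $R$-module $V^\bullet$ which is (levelwise) free, the homotopy/cohomology of $\Bin_R(V^\bullet)$ can be computed via a K\"unneth-type decomposition coming from the coproduct description $\Bin(\Z^{a+b})=\Bin(\Z^a)\otimes\Bin(\Z^b)$ and the fact that $\Bin$ takes (levelwise split) short exact sequences appropriately. Using this, $\Bin_R(A[-1])$ for $A[-1]$ concentrated in degree $1$ decomposes, after passing to associated graded for the binomial filtration, into a direct sum indexed by the "multi-degree": the piece of binomial degree $d$ is $\mathrm{Sym}^d$ of the bottom graded piece, which is $A[-1]$ itself, i.e.\ a shifted exterior/divided-power combination. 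Concretely $\mathrm{gr}^d_{\mathrm{Bin}}\Bin_R(A[-1])\simeq (\mathrm{Sym}^d A)[-d]$ up to a quasi-isomorphism, because a class in cochain degree $1$ contributes, under the shuffle/Alexander--Whitney comparison, a shift by one per factor. Hence the cohomology of $\Bin_R(A[-1])$ is concentrated in degrees $0,1,2,\ldots$ with $H^d=\mathrm{Sym}^d A$ (with $H^0=R$), and crucially the binomial degree-$d$ summand is exactly the part living in cohomological degree $d$.

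From this the lemma is almost immediate. Since $\mathrm{gr}^d_{\mathrm{Bin}}$ contributes cohomology only in cochain degree $d$, the subcomplex $\Bin_{R,\le N}(A[-1])$ has the same cohomology as $\Bin_R(A[-1])$ in all degrees $\le N$, and in particular $\tau_{\le m}\Bin_{R,\le N}(A[-1])$ has cohomology equal to $\bigoplus_{d\le m}\mathrm{Sym}^d A[-d]$ for every $N\ge m$. The same computation with $N=m$ shows $\tau_{\le m}\Bin_{R,\le m}(A[-1])$ has this cohomology, and the comparison map $\tau_{\le m}\Bin_{R,\le N}\leftarrow\tau_{\le m}\Bin_{R,\le m}$ is a filtered inclusion inducing an isomorphism on associated graded pieces in degrees $\le m$ (it is the identity there, since the only new generators in $\Bin_{R,\le N}$ beyond $\Bin_{R,\le m}$ have binomial degree $>m$ hence live in cohomological degree $>m$ and are killed by $\tau_{\le m}$), so it is a quasi-isomorphism. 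For the second map $\tau_{\le m}\Bin_{R,\le m}(A[-1])\to\Bin_{R,\le m}(A[-1])$: the complex $\Bin_{R,\le m}(A[-1])$ has, by the graded computation, no cohomology in degrees $>m$ either (its top graded piece $\mathrm{gr}^m$ already sits in degree $m$, and there is nothing of binomial degree $>m$), so truncating at $m$ loses nothing and the map is a quasi-isomorphism.

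The step I expect to require the most care is the second paragraph: justifying rigorously the K\"unneth/associated-graded computation of $H^*\bigl(\Bin_R(A[-1])\bigr)$ and, in particular, pinning down that $\mathrm{gr}^d_{\mathrm{Bin}}\Bin_R(A[-1])$ is quasi-isomorphic to something concentrated in cochain degree exactly $d$. One clean way to do this is to use that $A[-1]$ is the (shifted) reduced cochains of a wedge of circles, equivalently that $\Bin_R(A[-1])$ is the $R$-linear binomial cochain algebra model of a torus $(S^1)^m$ (Ekedahl's model), whose cohomology is the exterior algebra $\Lambda^*_R(A)$; matching the binomial filtration with the cohomological grading there is then exactly the statement that the $d$-th exterior power sits in degree $d$. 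Alternatively one argues directly with the explicit polynomial description of $\Bin(\Z^m)$ and the combinatorial (cobar-type) differential, filtering by binomial degree and checking acyclicity of each $\mathrm{gr}^d$ except in the top spot; this is elementary but slightly tedious, and I would relegate the bookkeeping to a short computation rather than spell it out in full.
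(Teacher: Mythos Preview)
Your approach is essentially the paper's: both arguments use the associated graded of the binomial filtration on $\Bin_R(A[-1])$, show that the $d$-th graded piece has cohomology concentrated in degree exactly $d$, and conclude that the binomial and canonical filtrations coincide (equivalently, the filtration spectral sequence degenerates at $E_1$).

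There is, however, a genuine misidentification in your second paragraph. The associated graded of the binomial filtration is the divided power functor $\Gamma^d$, not $\Sym^d$; this is the content of \cite[Remark~2.8]{ksz}, and it matters over a general binomial ring. The d\'ecalage isomorphism then reads
\[
\Gamma^d(A[-1])\;\simeq\;\Lambda^d(A)[-d],
\]
see \cite[(4.3.5)]{Illusie}, so the cohomology of $\Bin_R(A[-1])$ is the exterior algebra $\Lambda^*(A)$, not the symmetric algebra. You in fact say exactly this in your final paragraph when you invoke the torus model, so your text is internally inconsistent: the line ``$H^d=\Sym^d A$'' and the claim $\mathrm{gr}^d\simeq(\Sym^d A)[-d]$ are wrong as stated and should be replaced by $H^d=\Lambda^d A$ and $\mathrm{gr}^d\simeq\Lambda^d(A)[-d]$.

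This error is cosmetic for the lemma itself---all you need is that $\mathrm{gr}^d$ is concentrated in cochain degree $d$, and that is true either way---so your deduction in the third paragraph goes through unchanged once the identification is fixed. But you should cite the two inputs ($\mathrm{gr}\,\Bin=\Gamma$ and the d\'ecalage) rather than the informal ``K\"unneth/shuffle'' discussion, which does not actually pin down $\Gamma$ versus $\Sym$.
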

\begin{proof}
%{\bf !!!}
        The functor $\Bin_{R,\le m}(-)/\Bin_{R,\le m-1}(-)$ is naturally isomorphic to the $m$-th divided powers functor $\Gamma^n(-)$, see~\cite[Remark 2.8]{ksz}. So the binomial filtration induces a spectral sequence
        $$E^{pq}_1=H^{p+q}(\Gamma^{-p}(A[-1]))\implies H^{p+q}(\Bin_R(A[-1])).$$
        We have the d\'ecalage isomorphism $\Gamma^{-p}(A[-1])\cong \Lambda^{-p}(A)[p]$ where $\Lambda^*(-)$ is the \emph{Grassmann} (or \emph{exterior}) algebra functor, see \cite[(4.3.5)]{Illusie}.
        So we have $E^{pq}_1=H^{2p+q}(\Lambda^{-p}A)$, which implies that the terms $E^{pq}_1$ vanish outside the line $2p+q=0$, and
        hence the spectral sequence degenerates at $E_1$. 
        %We conclude that the monoid unit $H[-1]\ar 
        %\Bin_R(H[-1])$ is an isomorphism on $H^1(-)$. 
        Thus the map
        $$H^*(\Bin_{R,\le -p}(A[-1]))\ar H^*(\Bin_R(A[-1]))$$ is isomorphic to the inclusion 
        of $\Lambda^{\le -p}(A)$ in the Grassmann algebra $\Lambda^*(A)$.
        This implies that the binomial and canonical filtrations are equivalent.
        
        %Thus
        %$H^*(\Bin_{R,\le }(H[-1]))$ is isomorphic to %the Grassmann algebra $\Lambda^*(H)$.
        
\end{proof}
%In particular the binomial and canonical filtrations on $\Bin_R(H[-1])$ are naturally equivalent. Moreover $\tau_{\le N}\Bin_{R,\le N}(H[-1])\ar \Bin_R(H[-1])$ is a quasi-isomorphism once $N\ge \rk H$.

This allows one to construct a finite rank model for the cochain complex given by Theorem~\ref{main1}:

%{\bf Sasha: I don't think the below truncation is functorial along the diagram}
\begin{cor}\label{finite_rank_thm2}
We use the notation and assumptions of Theorem~\ref{main1}. By setting 
\begin{equation}\label{def_tilde_hbin}
I\mapsto\tau_{\leq \dim T}\Bin_{\leq \dim T}(H^1(T_I,R)[-1]),I\subset \{1,\ldots,k\}
\end{equation}
we get a cocubical cochain complex such that the sum of the ranks of all cochain and cubical components is finite. The total cochain complex of the corresponding Mayer-Vietoris double complex calculates
$$H^*(T,\bigcup_{i=1}^k T_i,R)\cong H_{\dim T-*}(T-\bigcup_{i=1}^k T_i,R).$$
\end{cor}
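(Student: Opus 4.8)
The plan is to produce a natural, term-wise quasi-isomorphism of cocubical cochain complexes between the diagram~\eqref{def_tilde_hbin} and the diagram $I\mapsto \Bin(H^1(T_I,R)[-1])$ of Theorem~\ref{main1}, then to invoke the (standard) fact that the Mayer-Vietoris construction sends term-wise quasi-isomorphisms to quasi-isomorphisms, and finally to combine this with Theorem~\ref{main1}. The finiteness assertion is then a short bookkeeping argument.

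First I would fix $I\subset\{1,\ldots,k\}$, write $A:=H^1(T_I,R)$ and $m:=\dim T$, and note that since $T_I$ is path connected, $A$ is free over $R$ of rank $\dim T_I\le m$. Lemma~\ref{lemma:bin_vs_canonical} gives a quasi-isomorphism $\tau_{\le m}\Bin_{R,\le m}(A[-1])\to\Bin_{R,\le m}(A[-1])$, and its proof moreover identifies the map induced on cohomology by the inclusion $\Bin_{R,\le m}(A[-1])\hookrightarrow\Bin_R(A[-1])$ with the inclusion of graded rings $\Lambda^{\le m}(A)\hookrightarrow\Lambda^*(A)$. As $m=\dim T\ge\rk A$, the target is generated in degrees $\le m$, so this inclusion is an equality and $\Bin_{R,\le m}(A[-1])\hookrightarrow\Bin_R(A[-1])$ is itself a quasi-isomorphism. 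Composing the two inclusions, the natural map
$$\tau_{\le\dim T}\Bin_{R,\le\dim T}(H^1(T_I,R)[-1])\ar\Bin(H^1(T_I,R)[-1])$$
is a quasi-isomorphism. All of $\Bin_R$, the binomial filtration, the canonical truncation, and the two structural inclusions are functorial in the $R$-module argument, so as $I$ ranges over $\square_k$ these maps assemble into an honest natural transformation of cocubical cochain complexes which is a quasi-isomorphism at each object.

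Next I would compare the two Mayer-Vietoris double complexes. Filtering $\MV$ by cubical degree $p$, which ranges over the finite set $\{0,1,\ldots,k\}$, produces a convergent spectral sequence with first page $E_1^{p,q}=\bigoplus_{|I|=p}H^q(C_I^*)$; a term-wise quasi-isomorphism of cocubical complexes is an isomorphism on this page and hence on the abutment $H^*(\MV)$. Applying this to the natural transformation above and combining with Theorem~\ref{main1}, we conclude that $\MV(\{\tau_{\le\dim T}\Bin_{R,\le\dim T}(H^1(T_I,R)[-1])\})$ computes $H^*(T,\bigcup_{i=1}^k T_i,R)\cong H_{\dim T-*}(T-\bigcup_{i=1}^k T_i,R)$.

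For finiteness: $H^1(T_I,R)$ is free of finite rank; the groups $\Bin(\Z^d)_{\le i}$ are free of finite rank; and, as the proof of Lemma~\ref{lemma:bin_vs_canonical} exhibits via the d\'ecalage identifying the binomial-filtration subquotients with exterior powers, $\Bin_{R,\le\dim T}(H^1(T_I,R)[-1])$ is degreewise finitely generated and lives in a bounded range of cochain degrees; applying $\tau_{\le\dim T}$ retains only cochain degrees $0,\ldots,\dim T$; and $\square_k$ has $2^k$ objects. Hence the total rank is finite. In this argument there is no serious obstacle: the real content is already packaged in Lemma~\ref{lemma:bin_vs_canonical} --- it is that lemma which makes truncation at $\dim T$ lossless --- and the passage through $\MV$ is the standard spectral-sequence comparison. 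The points needing a line of care are that the comparison is a genuine natural transformation of cocubical complexes, so that $\MV$ may be applied functorially and the two spectral sequences really map to one another, and that the canonical truncation does not spoil finite generation in top cochain degree, which one again reads off from the d\'ecalage used in Lemma~\ref{lemma:bin_vs_canonical}.
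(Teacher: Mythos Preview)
Your proposal is correct and follows precisely the route the paper intends: the paper's own proof is simply a $\square$ after Lemma~\ref{lemma:bin_vs_canonical}, and your argument is exactly the unpacking of that---the inclusion $\tau_{\le\dim T}\Bin_{R,\le\dim T}(H^1(T_I,R)[-1])\hookrightarrow\Bin_R(H^1(T_I,R)[-1])$ is a termwise quasi-isomorphism of cocubical complexes because $\dim T\ge\dim T_I$ for every $I$, and then the Mayer-Vietoris spectral sequence~(\ref{spseq}) finishes the job. Remark~\ref{remark:why_filtration_so_high_1} confirms that this is the intended reasoning.
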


$\square$

\begin{remark}\label{remark:why_filtration_so_high_1}
Note that in~(\ref{def_tilde_hbin}) we take the $\dim T$-th terms of both the canonical and the binomial filtrations for all $I$, even though for each individual $T_I$ the $\dim T_I$-th terms would suffice by Lemma~\ref{lemma:bin_vs_canonical}. The reason is that we need the result to be a cocubical cochain subcomplex of $\{\Bin (H^1(T_I,R)[-1])\}$.
\end{remark}

\smallskip

Now we generalise this to the non-connected case. 
Consider a coaugmented two term complex $A^*=[A^0\ar A^1]$ of finitely generated $R$-modules. Denote the coaugmentation by $i:R\ar A^*$, and assume that $H^1(A^*)$ is a free $R$-module and $i$ induces an isomorphism of $H^0$.
We have the pushout $B(A^*)$ given by diagram~\eqref{eq:bin_pushout}.
\begin{lemma}\label{lemma:bin_vs_canonical_2}
The pushout $B(A^*)$ admits a natural filtration $B_{\le}(A^*)$ with properties similar to those in Lemma~\ref{lemma:bin_vs_canonical}, namely for any $N\ge m$ the maps
$$\tau_{\leq m}B_{\le N}(A^*)\al \tau_{\leq m}B_{\le m}(A^*) \ar B_{\le m}(A^*)$$
are quasi-isomorphisms.
\end{lemma}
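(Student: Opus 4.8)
The plan is to reduce Lemma~\ref{lemma:bin_vs_canonical_2} to Lemma~\ref{lemma:bin_vs_canonical} by analysing the associated graded of $B(A^*)$ with respect to a suitable filtration. First I would define the filtration $B_{\le}(A^*)$. Recall that $B(A^*)=\Bin(A^*)\otimes_{\Bin(R)}R$. The monad $\Bin$ carries its binomial filtration $(\Bin_{\le i})$, which induces a filtration on $\Bin(A^*)$, and I would like to push this through the tensor product. Concretely, set $B_{\le i}(A^*)$ to be the image of $\Bin_{\le i}(A^*)$ in $B(A^*)=\Bin(A^*)\otimes_{\Bin(R)}R$ under the natural projection; equivalently, $B_{\le i}(A^*)=\Bin_{\le i}(A^*)\otimes_{\Bin_{\le i}(R)}R$, once one checks that the relations imposed by $\Bin(R)\ar R$ are already generated in filtration degrees $\le i$ (they are, since they come from evaluation at $1\in R$, which is filtration-preserving). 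This is clearly functorial in $A^*$ and exhausts $B(A^*)$.

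Next I would identify the associated graded. The key structural input is the pushout square~\eqref{eq:bin_pushout}: since $i\colon R\ar A^*$ is a coaugmentation inducing an isomorphism on $H^0$, the cokernel complex $\bar A^*:=\mathrm{coker}(i)$ is concentrated in degree $1$ and equals $H^1(A^*)[-1]$ up to a contractible summand in degree $0$; more precisely, $\bar A^*$ is quasi-isomorphic to $H^1(A^*)[-1]$, and since $H^1(A^*)$ is free the complex $\bar A^*$ is itself termwise free. Now the pushout $B(A^*)=\Bin(A^*)\otimes_{\Bin(R)}R$ should, on the level of associated graded rings for the binomial filtration, be computed by $\Bin(\bar A^*)$: killing $\Bin(R)\ar R$ in the free binomial algebra $\Bin(A^*)$ amounts (after passing to $\gr$, where $\Bin$ behaves like a polynomial/divided-power functor) to quotienting by the ideal generated by the image of $\bar A^0$ — but $\bar A^0$ is contractible against $\bar A^1$ via the same nullhomotopy that trivialises the degree-$0$ part — so that up to quasi-isomorphism we get $\Bin_R(\bar A^*)\simeq \Bin_R(H^1(A^*)[-1])$, compatibly with the binomial filtrations. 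Here I would invoke the computation in the proof of Lemma~\ref{lemma:bin_vs_canonical}: $\Bin_{R,\le m}/\Bin_{R,\le m-1}\cong\Gamma^m$, and the décalage isomorphism $\Gamma^{-p}(H^1[-1])\cong\Lambda^{-p}(H^1)[p]$ forces the binomial-filtration spectral sequence of $\Bin_R(H^1(A^*)[-1])$ to degenerate at $E_1$ with everything on the single line $2p+q=0$.

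Granting this comparison, the argument runs exactly as in Lemma~\ref{lemma:bin_vs_canonical}: the binomial filtration on $B(A^*)$ induces a spectral sequence whose $E_1$-page, by the above, is $E_1^{pq}=H^{2p+q}(\Lambda^{-p}H^1(A^*))$, hence concentrated on the line $2p+q=0$ and degenerate; therefore $H^*(B_{\le -p}(A^*))\ar H^*(B(A^*))$ is the inclusion $\Lambda^{\le -p}H^1(A^*)\hookrightarrow\Lambda^*H^1(A^*)$, which shows that the binomial filtration $B_{\le}$ and the canonical filtration $\tau_{\le}$ on $B(A^*)$ are equivalent. In particular, for $N\ge m$ the two maps $\tau_{\le m}B_{\le N}(A^*)\al\tau_{\le m}B_{\le m}(A^*)\ar B_{\le m}(A^*)$ are quasi-isomorphisms, as claimed. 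Since all $A^i$, $R$ and $H^1(A^*)$ are finitely generated (and $H^1$ free), each $B_{\le i}(A^*)$ is a finitely generated $R$-module, so the filtration is by finite-rank subcomplexes.

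The main obstacle I anticipate is the second step: justifying rigorously that passing to the pushout along $\Bin(R)\ar R$ does not disturb the binomial filtration or the degeneration, i.e.\ that $\gr B(A^*)$ really is computed by $\gr\Bin_R(\bar A^*)$ up to the contractible degree-$0$ discrepancy. The subtlety is that $\Bin$ does not commute with arbitrary colimits, so the pushout must be handled by hand — one wants to present $B(A^*)$ explicitly as $\Bin_R$ of a two-term complex quasi-isomorphic to $H^1(A^*)[-1]$, using that the coaugmentation $i$ is a split injection in degree $0$ after tensoring suitably (this is where the $H^0$-isomorphism hypothesis and termwise freeness of $A^*$ enter). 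Once $B(A^*)$ is so presented, everything else is formal and reduces verbatim to Lemma~\ref{lemma:bin_vs_canonical}; I would expect the paper to record the needed presentation as a preliminary observation (perhaps the ``decomposition'' alluded to in Lemma~\ref{lemma:decomposition}) and then cite it here.
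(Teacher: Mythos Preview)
Your proposal is correct, and your final paragraph in fact converges to exactly what the paper does; but the paper's argument is much shorter than the main route you develop. The paper defines the filtration essentially as you do (writing it as $B_{\le N}(A^*)=(\Bin_{\le N}(A^*)\cdot\Bin(R))\otimes_{\Bin(R)}R$, which coincides with the image of $\Bin_{\le N}(A^*)$ in $B(A^*)$), and then bypasses your associated-graded / spectral-sequence analysis entirely: it simply observes that for $R=\Z_{(p)}$ the canonical filtration on the two-term complex $A^*$ splits non-canonically, yielding a \emph{filtered} quasi-isomorphism $B(A^*)\simeq\Bin(H^1(A^*)[-1])$, after which Lemma~\ref{lemma:bin_vs_canonical} applies verbatim. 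This is precisely the ``present $B(A^*)$ as $\Bin_R$ of something quasi-isomorphic to $H^1(A^*)[-1]$'' workaround you anticipate at the end --- the paper just goes straight there. Your longer route via $\gr B(A^*)\simeq\Gamma(\bar A^*)$ would, if the identification were justified without splitting $A^*$, work over any binomial $R$; the paper does not attempt this and relies on $\Z_{(p)}$ being a PID, which is all that is needed for Corollary~\ref{finite_rank_thm1}. (Your pointer to Lemma~\ref{lemma:decomposition} is a red herring: that lemma concerns the connected-intersection case and the diagram $A^*(T_I,-)$, not the local splitting used here.)
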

\begin{proof}
%{\bf !!!}
The coaugmentation map $i:R\ar A^*$ turns $\Bin(A^*)$ into a (right) $\Bin(R)$-module.
For a given $N$
we consider the product $\Bin_{\le N}(A^*)\cdot \Bin(R)$ 
as a subcomplex of $\Bin(A^*)$ via $i$.  
Set $$B_{\le N}(A^*)= 
(\Bin_{\le N}(A^*)\cdot \Bin(R))\otimes_{\Bin(R)}R$$
where the tensor product is again taken in the category of cosimplicial modules over $\Bin (R)$. The result coincides
%with the image of $\Bin_{\le N}(A^*)\cdot \Bin(R)\otimes R$ in $B(A^*)$.
with its image in $B(A^*)$, cf.\ the remarks after diagram~(\ref{eq:bin_pushout}).
Note that for $R=\Z_{(p)}$ the canonical filtration on
$A^*$ splits (non-canonically), providing a filtered quasi-isomorphism $B(A)\ar \Bin(H^1(A^*)[-1])$. It remains to apply Lemma~\ref{lemma:bin_vs_canonical}.
\end{proof}

%{\bf Sasha: First we have to find what is the right truncation in the previous case... the general case will be formulated by using the same filtration on $H\Bin$}

\begin{cor}\label{finite_rank_thm1}
We use the notation of Theorem~\ref{main}. By setting 
\begin{equation}\label{def_tilde_bin}
I\mapsto\tau_{\leq \dim T}B_{\leq \dim T}(T_I,R), I\subset\{1,\ldots,k\}
\end{equation}
we get a cocubical cochain complex such that the sum of the ranks of all cochain and cubical components is finite. The total cochain complex of the corresponding Mayer-Vietoris double complex calculates
$$H^*(T,\bigcup_{i=1}^k T_i,R)\cong H_{\dim T-*}(T-\bigcup_{i=1}^k T_i,R).$$
\end{cor}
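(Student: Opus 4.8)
The plan is to deduce this corollary from Theorem~\ref{main} together with Lemma~\ref{lemma:bin_vs_canonical_2}, by checking that the truncation~\eqref{def_tilde_bin} is (i)~functorial in $I$, so that it really is a cocubical cochain complex, (ii)~finite rank in each bidegree, and (iii)~quasi-isomorphic, after forming the Mayer--Vietoris total complex, to $\MV(\{B(T_I,\Z_{(p)})\})$.

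First I would address functoriality. For each $I$, the complex $B(T_I,R)$ is by definition a direct sum over the path components $T'\subset T_I$ of the pushouts $B(A^*(T',R))$; each summand carries the filtration $B_{\le N}(A^*(T',R))$ constructed in the proof of Lemma~\ref{lemma:bin_vs_canonical_2}, and we set $B_{\le N}(T_I,R)$ to be the direct sum of these. A morphism $J\subset I$ in $\square_k$ induces a map $B(T_J,R)\ar B(T_I,R)$ coming from the restriction maps $A^*(T'',R)\ar A^*(T',R)$ for components $T'\subset T_I$ lying over components $T''\subset T_J$; since $\Bin$ is a functor and the filtration $\Bin_{\le N}$ is by \emph{subfunctors}, these restriction maps are compatible with the $B_{\le N}$-filtrations, so $I\mapsto B_{\le\dim T}(T_I,R)$ is a cocubical cochain subcomplex of $I\mapsto B(T_I,R)$. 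Applying the (exact, functorial) canonical truncation $\tau_{\le\dim T}$ object-wise preserves this, giving the cocubical cochain complex~\eqref{def_tilde_bin}.

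Next, finiteness. Each $A^*(T',R)$ is a two-term complex of finitely generated free $R$-modules, so $\Bin_{R,\le\dim T}(A^*(T',R))$ has finite rank in each cosimplicial (equivalently, cochain) degree by the corresponding property of $\Bin(\Z^m)_{\le i}$; multiplying by $\Bin(R)$ and tensoring down over $\Bin(R)$ to $R$, as in Lemma~\ref{lemma:bin_vs_canonical_2}, keeps this finite, and then $\tau_{\le\dim T}$ only kills terms. There are finitely many components $T'$ in each $T_I$ and finitely many subsets $I\subset\{1,\ldots,k\}$, so the total sum of ranks over all $(p,q)$ is finite.

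Finally, the quasi-isomorphism. By Lemma~\ref{lemma:bin_vs_canonical_2} (with $m=\dim T$, letting $N\to\infty$, i.e.\ comparing to the unfiltered $B(A^*)$) the inclusion $\tau_{\le\dim T}B_{\le\dim T}(T_I,R)\ar \tau_{\le\dim T}B(T_I,R)\al B(T_I,R)$ is a zig-zag of quasi-isomorphisms, for each fixed $I$ — here one uses that $\dim T\ge\dim T_I$, so truncating $B(T_I,R)$ at level $\dim T$ loses nothing, since by Theorem~\ref{main} (or directly, since $A^*(T_I,R)$ models $\tau_{\le 1}C^*_{\mathrm{sing}}(T_I,R)$ and $B$ of it is a derived binomial algebra with cohomology that of $(S^1)^{\dim T_I}$) the cohomology of $B(T_I,R)$ is concentrated in degrees $\le\dim T_I$. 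A map of cocubical cochain complexes that is a quasi-isomorphism in each cubical degree induces a quasi-isomorphism on Mayer--Vietoris total complexes — this is the standard spectral-sequence comparison for the filtration of the total complex by the cubical degree $p$, which converges since everything is bounded in each fixed $p$. Hence $\MV(\{\tau_{\le\dim T}B_{\le\dim T}(T_I,R)\})\ar\MV(\{B(T_I,R)\})$ (through the intermediate $\MV(\{\tau_{\le\dim T}B(T_I,R)\})$) is a quasi-isomorphism, and the target computes $H^*(T,\bigcup_i T_i,R)\cong H_{\dim T-*}(T-\bigcup_i T_i,R)$ by Theorem~\ref{main}.

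The main obstacle is the functoriality point: one must make sure the filtration $B_{\le N}(T_I,R)$ introduced in the proof of Lemma~\ref{lemma:bin_vs_canonical_2} is genuinely natural in $T_I$ — both in the restriction maps among components and in the ambient $\Bin(R)$-module structure — so that~\eqref{def_tilde_bin} is a bona fide subfunctor of $\{B(T_I,R)\}$ on the nose (this is exactly the reason, noted in Remark~\ref{remark:why_filtration_so_high_1}, for truncating uniformly at $\dim T$ rather than at the variable level $\dim T_I$). Once that is in place, finiteness and the quasi-isomorphism are routine applications of Lemma~\ref{lemma:bin_vs_canonical_2} and the Mayer--Vietoris spectral sequence.
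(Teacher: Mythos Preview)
Your proof is correct and follows the same approach as the paper, which treats the corollary as immediate from Lemma~\ref{lemma:bin_vs_canonical_2} and Theorem~\ref{main} (the paper's proof is just ``$\square$''); you have simply spelled out the details of functoriality, finiteness, and the spectral-sequence comparison that the paper leaves implicit. One small slip: in your zig-zag ``$\tau_{\le\dim T}B_{\le\dim T}(T_I,R)\ar \tau_{\le\dim T}B(T_I,R)\al B(T_I,R)$'' the second arrow should also point right (both are inclusions), so in fact no zig-zag is needed---it is a single composite of inclusions, each a quasi-isomorphism for the reasons you give.
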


$\square$

\begin{remark}\label{remark:why_filtration_so_high_2}
Note that in~(\ref{def_tilde_bin}) we again take the $\dim T$-th terms of both the canonical and the binomial filtrations for all $I$, cf.\ Remark~\ref{remark:why_filtration_so_high_1}.
\end{remark}

\bigskip

{\bf Size of the models.} We do not know if Theorem~\ref{rational} remains true if one replaces $\Q$ by $\Z$ in it. If it does, then we get a smaller cochain complex that calculates the homology of $T-\bigcup_{i=1}^k T_i$ than the ones given by Corollaries~\ref{finite_rank_thm1} and~\ref{finite_rank_thm2}. Let us quantify the word ``smaller''. Below by the \emph{size} of a cochain complex we mean the total rank. Let us take a component $T'$ of some $T_I$ and estimate the size of its contribution. Set $r=\dim T'$.

The contribution of $T'$ to the model of Theorem~\ref{rational} is simply $2^r$, the total rank of the cohomology of $T'$.

\smallskip

%Starting from a free Abelian group $H$ of rank $m<\infty$, viewed as a cochain complex concentrated in degree~1, we can form the Grassmann algebra $\Lambda^*(H)$ generated by $H$, the cochain complex $\Bin(H)$, and the truncated version $\tau_{\leq m}\Bin_{\leq m}(H)$. The total rank of $\Lambda^*(H)$ is $2^m$.

\iffalse
\smallskip

{\bf Sasha: There is no good closed answer or any reasonable need for that. Asymptotically however 
$$\log \rk(\tau_{\le m}\Bin_{\le m}(H))\sim C\cdot m\log m$$ for some $C>0$ (versus linear function in $m$).What's about $B(A)$?}
\begin{lemma}
The total rank of the cochain complex $\tau_{\leq m}\Bin_{\leq m}(H)$ is ....
\end{lemma}
\fi

Let us compare this with the models given by Corollaries~\ref{finite_rank_thm2} and~\ref{finite_rank_thm1}. For our estimates below we use the non-normalised Moore complex of a cosimplicial group instead of the normalised version in the Dold-Kan correspondence, keeping in mind that the size of the former is greater than or equal the size of the latter.
%In this case one can easily compute the size of the complex computing cohomology of the complement.

Let $K^*=[K^0\ar K^1]$ be a $2$-term complex of free Abelian groups with $K^0$ in degree zero.
Recall that the (non-normalised) Moore complex of the cosimplicial group corresponding to $K^*$
in degree $m$ is equal to $K^0\oplus (K^1)^{\oplus m}$. Recall also that we have set $n=\dim T$. 
Let $$D(n,d):=\rk \Bin_{\le n}(\mb{Z}^d)=\rk \Sym^{\le n}(\mb{Z}^d)=
\rk \Sym^{n}(\mb{Z}^{d+1})=\binom{n+d}{n}.$$
%Using the fact that $\binom{n}{k}=(1+o(1))k\ln \frac{n}{k}$
%if $k/n=o(1)$ one can .

%Consider a subtorus $T'\subset T$ of dimension $r$.
The size of Moore complex corresponding to $\tau_{\le n}\Bin_{\le n}(H^1(T')[-1])$
is less than or equal
\begin{equation}\label{estimate1}
D(n,n)+D(n,2n)+\cdots+D(n,nr).
\end{equation}
This is the contribution of $T'$ to the model of Corollary~\ref{finite_rank_thm2}. It is $\leq $ the contribution of $T'=T$, which is $e^{(1+o(1))n\ln n}$ as $n\ar\infty$, and in general (\ref{estimate1}) is $\sim D(n,nr)=e^{(1+o(1))n\ln r}$ as $n\ar\infty$
%$D(n,n)+D(n,n+1)+...+D(n,nr)=D(n+1,nr+1)~D(n,nr)$,
under the assumption $1/r= o(1)$.

\smallskip

Similarly, the size of $\tau_{\le n}\Bin_{\le n}(K^*)$
is less than or equal than the sum
$$D(n,a+b)+D(n,a+2b)+\cdots+D(n,a+nb)$$
where $a=\rk K^0,b=\rk K^1$. This is the contribution of $T'$ to the model of Corollary~\ref{finite_rank_thm1} if we set $a=n-r+1$ and $b=n$. It is $\leq e^{(1+o(1))n\ln n}$ as $n\ar\infty$, uniformly in $r$.

\iffalse
\smallskip

Summarising, we see that the size of the models of both Corollary~\ref{finite_rank_thm2} and Corollary~\ref{finite_rank_thm1} is not greater than
$$e^{(1+o(1))n\ln n+k\ln 2}$$
\fi

\section{Proofs of Theorems~\ref{main}, \ref{main1} and~\ref{rational}}\label{proofs}

Let us start with some preliminary remarks which will be useful both in the rational and $\Z_{(p)}$ cases.

{\bf A spectral sequence.} If $\{C_I^*\}_{I\subset\{1,\ldots, k\}}$ is a cocubical cochain complex, there is a \emph{vertical filtration} on the double complex $(C^{p,q}):=\MV(\{C_I^*\})$: the $t$-th term of the filtration is the double complex with $(p,q)$-component equal $C^{p,q}$ for $p\geq t$, and $0$ otherwise. This filtration induces a spectral sequence with
\begin{equation}\label{spseq}
E^{p,q}_1=\bigoplus_{|I|=p} H^q(C^*_I)\implies H^*\bigl(\MV(\{C_I^*\})\bigr).
\end{equation}

\medskip

{\bf Mayer-Vietoris principle.} We will now describe standard cochain complexes that calculate the relative cohomology of $T$ modulo the union of a family of subspaces.

Let $R$ be a commutative ring, and suppose the torus $T$ has been triangulated so that each $T_i$ is a subpolyhedron. We use $C^*(-,R)$ to denote the simplicial cochains of the triangulation with coefficients in $R$. If $X_1,\ldots,X_k$ is a finite family of subpolyhedra, then $I\mapsto C^*(X_I,R)$ defines a cocubical cochain complex, as does $I\mapsto \Omega^*(T_I)$.

\begin{lemma}\label{standard_complexes}
The total complex of $\MV(\{C^*(X_I,R)\})$ calculates $H^*(T,\bigcup_{i=1}^k X_i,R)$, for an arbitrary commutative ring $R$. Moreover, the total complex of $\MV(\{\Omega^*(T_I)\})$ calculates $H^*(T,\bigcup_{i=1}^k T_i,\R)$.
\end{lemma}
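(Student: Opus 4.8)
The plan is to prove each of the two assertions of Lemma~\ref{standard_complexes} by the standard double-complex (Mayer–Vietoris / \v{C}ech-to-derived-functor) argument, applied to the cocubical cochain complexes $I\mapsto C^*(X_I,R)$ and $I\mapsto\Omega^*(T_I)$ respectively. In both cases the key point is that the two spectral sequences of the double complex $\MV$ behave well: the one obtained by first taking $d$-cohomology degenerates onto the correct answer, while the filtration~\eqref{spseq} (first taking the combinatorial differential $\delta$) identifies the total cohomology with the relative cohomology of the pair.

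\textbf{The simplicial-cochains case.} First I would recall that, since $T$ and all $X_i$ are subpolyhedra of a fixed triangulation, for every $I$ the complex $C^*(X_I,R)$ is the simplicial cochain complex of the subcomplex $X_I\subset T$, with $C^q(X_I,R)$ free on the $q$-simplices lying in $X_I$; restriction along $J\subset I$ is the (surjective, split) projection onto those cochains. Fix a cohomological degree $q$ and consider the row $(\MV(\{C^*(X_I,R)\}))^{\bullet,q}$, i.e.\ the cochain complex $p\mapsto\bigoplus_{|I|=p}C^q(X_I,R)$ with differential $\delta$. Because $C^q(-,R)$ is, simplex by simplex, the \emph{indicator} functor of the poset $\square_k$ (the cochains supported on a fixed $q$-simplex $\sigma$ form exactly $R$ on those $I$ with $\sigma\subset X_I$, which is a down-set complement, and $0$ otherwise), this row splits as a direct sum over $q$-simplices $\sigma$ of $T$ of the augmented simplicial chain complex of a simplex (when $\sigma$ lies in some $X_i$) or is the whole row coming from $\sigma\notin\bigcup X_i$. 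A direct computation — the combinatorial content of the inclusion–exclusion / nerve argument — shows that the $\delta$-cohomology of the row is concentrated in $p=0$ and equals the kernel of $C^q(T,R)\to\bigoplus_i C^q(X_i,R)$, which is precisely the relative cochain group $C^q(T,\bigcup_i X_i,R)$. Hence the $E_1$-page of the \emph{other} spectral sequence of the double complex collapses to the single row $C^*(T,\bigcup_i X_i,R)$, with differential induced by $d$, and therefore $H^*(\MV)\cong H^*(T,\bigcup_i X_i,R)$. I would phrase this cleanly by saying: the augmented cocubical object $C^q(T,\bigcup X_i,R)\to\{C^q(X_I,R)\}_{I\neq\varnothing}$ is exact for each $q$ because it is so simplex-wise, and exactness of a cocubical complex implies acyclicity of the associated $\delta$-complex in positive filtration.

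\textbf{The de Rham case.} Here I would reduce to the previous case, or argue directly via a partition of unity. The clean route: for any subpolyhedron $X\subset T$ with the subspace topology, $\Omega^*(T_I)$ — wait, note the asymmetry in the statement: the de Rham assertion is made only for the $T_i$ themselves (smooth submanifolds, namely affine subtori), not for general subpolyhedra, precisely because $\Omega^*$ needs a manifold. So for each $I$, $T_I$ is a disjoint union of affine subtori, $\Omega^*(T_I)$ computes $H^*(T_I,\R)$ by the de Rham theorem, and the cocubical structure maps are the genuine restriction of forms. Now run the filtration~\eqref{spseq}: its $E_1$-term is $\bigoplus_{|I|=p}H^q(T_I,\R)$, which is exactly the $E_1$-term of the spectral sequence for $\MV(\{C^*(T_I,\R)\})$ after identifying de Rham with simplicial cohomology. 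A compatible system of de Rham-to-singular quasi-isomorphisms (e.g.\ integration of forms over simplices, which is a map of cocubical complexes up to the usual homotopies, or more robustly the zig-zag through smooth singular cochains) gives a morphism of Mayer–Vietoris double complexes $\MV(\{\Omega^*(T_I)\})\to\MV(\{C^*_{\mathrm{sm}}(T_I,\R)\})$ inducing an isomorphism on $E_1$, hence on the abutment; combined with the first part applied to $R=\R$ (and the fact that smooth singular and simplicial cochains of the triangulated pair agree), this yields $H^*(\MV(\{\Omega^*(T_I)\}))\cong H^*(T,\bigcup_i T_i,\R)$.

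\textbf{Main obstacle.} The genuinely non-formal point is the exactness of the augmented cocubical complex $R\to\{C^q(X_I,R)\}$ in each fixed degree $q$ — i.e.\ the claim that inclusion–exclusion on the triangulation produces exactly the relative cochains with no higher $\delta$-cohomology. This is where one must use that the triangulation is compatible with \emph{all} the $X_i$ simultaneously (so that $X_I$ is again a full subcomplex and $C^q(X_I,R)$ is free on an honest set of simplices), and the combinatorial lemma that the reduced cochain complex of a non-empty simplex is acyclic. Everything else — commutation of $d$ and $\delta$, convergence of the finite (hence bounded) double-complex spectral sequences, and the de Rham comparison — is standard and I would only sketch it. A secondary subtlety worth a sentence is the sign/indexing bookkeeping showing that the surviving row really is the relative cochain complex $C^*(T,\bigcup X_i,R)$ with its usual differential, not merely abstractly isomorphic to its cohomology.
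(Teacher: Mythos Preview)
The paper gives no proof of this lemma at all---it is marked with an empty $\square$ and treated as a standard fact. Your argument is correct and is exactly the standard double-complex/inclusion--exclusion proof one would expect the authors to have in mind: the simplex-by-simplex decomposition of each row $(\MV)^{\bullet,q}$ reduces the vanishing of higher $\delta$-cohomology to the acyclicity of the augmented cochain complex of a nonempty simplex, and the surviving $p=0$ column is literally $C^*(T,\bigcup_i X_i,R)$.

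One small simplification in your de Rham paragraph: you do not need the hedge about ``up to the usual homotopies'' or the detour through smooth singular cochains. If the triangulation is taken to be smooth (always possible for tori and their subtori), then integration over the simplices of the triangulation gives a \emph{strict} map of cocubical cochain complexes $\{\Omega^*(T_I)\}\to\{C^*(T_I,\R)\}$, a quasi-isomorphism on each component by de Rham's theorem; spectral sequence~(\ref{spseq}) and the first part of the lemma with $R=\R$ then finish the job in one step.
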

$\square$

\medskip

We are now ready to prove Theorems~\ref{main},~\ref{main1}, and~\ref{rational}. We start with the latter.

{\bf Proof of Theorem~\ref{rational}.} It suffices to prove Theorem~\ref{rational} in the case $R=\R$, so let us assume this. If $T'\subset T$ is an affine subtorus, we have an inclusion $H^*(T',\R)\ar \Omega^*(T')$ given by taking each class to its translation invariant representative. Moreover, if $T''\subset T'$ is a smaller affine subtorus, the next diagram commutes:
$$
\begin{tikzcd}
H^*(T',\R)\ar[r]\ar[d] & \Omega^*(T')\ar[d]\\
H^*(T'',\R)\ar[r] & \Omega^*(T'').
\end{tikzcd}
$$

So we get a map $\{H^*(T_I,\R)\}\ar \{\Omega^*(T_I)\}$ of cocubical cochain complexes which is a quasi-isomosphism on every cubical component. Using spectral sequence~(\ref{spseq}) we see that the map of the total complexes of the Mayer-Vietoris constructions is also a quasi-isomorphism. Theorem~\ref{rational} now follows from Lemma~\ref{standard_complexes}.$\square$

\medskip

{\bf Auxiliary lemmas.} Here we prove a few lemmas which will be needed in the proof of Theorems~\ref{main} and~\ref{main1}. Let $R$ be a binomial ring.

Let $F:R\mbox{-}\mathrm{mod}\ar R\mbox{-}\mathrm{mod}$ be a functor. We extend $F$ object-wise to a functor $[\triangle,R\mbox{-}\mathrm{mod}]\ar [\triangle,R\mbox{-}\mathrm{mod}]$ and hence to a functor $C^*(R\mbox{-}\mathrm{mod})\ar C^*(R\mbox{-}\mathrm{mod})$ via the Dold-Kan equivalence. Abusing notation we will also denote by $F$ both these extensions. 

\begin{lemma}\label{left_quillen} 
The functor $F:C^*(R\mbox{-}\mathrm{mod})\ar C^*(R\mbox{-}\mathrm{mod})$ preserves quasi-isomorphisms between bounded cochain complexes of free non-negatively graded $R$-modules.
\end{lemma}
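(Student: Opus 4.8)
The statement is that the object-wise extension of a functor $F:R\mmod\ar R\mmod$ (via Dold--Kan) preserves quasi-isomorphisms between bounded below cochain complexes of free $R$-modules. My first move is to reduce to a much more rigid situation. A quasi-isomorphism $f:A^*\ar B^*$ between bounded complexes of free (hence projective) $R$-modules is a homotopy equivalence; write $g$ for a homotopy inverse and $h,h'$ for the homotopies. Under Dold--Kan a chain homotopy corresponds to a simplicial homotopy of the associated cosimplicial modules, and simplicial homotopies are preserved by any object-wise functor (they are built out of the module maps $A^n\ar B^n$ and finite direct sums, which $F$ applied object-wise respects since Dold--Kan is defined degreewise). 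Hence $F(f)$ is again a homotopy equivalence of cochain complexes, in particular a quasi-isomorphism. So the real content is: \emph{an object-wise functor sends a cochain homotopy equivalence of bounded complexes of frees to a cochain homotopy equivalence.}

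To make the previous paragraph precise I would set up the correspondence carefully. Given a bounded-below cochain complex of free $R$-modules, the inverse Dold--Kan functor produces a cosimplicial $R$-module whose $n$-th term is a finite direct sum of the terms of the complex (the non-normalised Moore description: in cosimplicial degree $n$ one gets $\bigoplus$ of shifted copies of the $A^q$, the coface/codegeneracy maps being signed combinations of the differential, identities, and zero). Applying $F$ object-wise to a cosimplicial module means applying $F$ to each term; since $F$ need not be additive I cannot commute $F$ past the direct sum, but that is fine — I only need that $F$ of a \emph{simplicial homotopy} (a collection of maps $h_i: X^n\ar Y^{n-1}$ satisfying the standard cosimplicial identities) is again a simplicial homotopy, which holds because those identities are equations between composites of structure maps, and $F$ (being a functor) preserves composites and commuting squares. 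Translating back through Dold--Kan, a cosimplicial homotopy becomes a cochain homotopy, and an equivalence an equivalence. The boundedness hypothesis guarantees all the direct sums appearing are finite, so no completeness subtleties arise; the freeness is used only to know the original quasi-isomorphism is a homotopy equivalence in the first place (via the projectivity of the terms and the standard fact that a quasi-isomorphism of bounded-below complexes of projectives is a homotopy equivalence).

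An alternative, perhaps cleaner, route is to avoid homotopies and argue by an explicit mapping-cone / induction on the length of the complexes. If $f:A^*\ar B^*$ is a quasi-isomorphism of bounded complexes of frees, its cone $C^*$ is a bounded acyclic complex of frees, hence (by boundedness and freeness) contractible, i.e.\ split exact: it decomposes as a finite direct sum of elementary complexes $\cdots\to 0\to R^{m}\xrightarrow{\mathrm{id}}R^{m}\to 0\to\cdots$. One then checks that $F$ applied object-wise to such an elementary two-term acyclic complex is again acyclic — this is where one uses that $F$ preserves the zero object and isomorphisms, together with the explicit Moore complex of the cosimplicial object of an elementary complex — and that $F$ of a (finite, cone-type) extension of acyclics, suitably tracked through Dold--Kan, stays acyclic. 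Combining the two routes: homotopy-invariance of object-wise functors on the cosimplicial side is the conceptually correct statement, and I would present it that way.

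\textbf{Main obstacle.} The delicate point is precisely that $F$ is \emph{not assumed additive} (indeed $\Bin$ is not), so one cannot simply say ``$F$ of a quasi-isomorphism is a quasi-isomorphism'' by exactness, nor commute $F$ with the direct sums in the Moore complex. The proof must route everything through \emph{homotopies} rather than exact sequences: the key lemma to nail down is that object-wise application of any functor preserves cosimplicial homotopies (equivalently, preserves the simplicial structure enough that $X\times\Delta^1$-type cylinders go to cylinders), and that Dold--Kan converts chain homotopies to cosimplicial homotopies and back. Once that is in hand the rest is bookkeeping. A secondary subtlety is making sure the reduction ``quasi-isomorphism $\Rightarrow$ homotopy equivalence'' genuinely needs only bounded-below-ness plus termwise projectivity, which is standard but should be cited or stated.
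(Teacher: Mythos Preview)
Your approach is correct and matches the paper's proof essentially line for line: reduce the quasi-isomorphism to a homotopy equivalence using boundedness and termwise projectivity, pass to cosimplicial objects via Dold--Kan, use that any object-wise functor preserves cosimplicial homotopies (the paper cites this as a result of Dold), and translate back. The paper says nothing beyond this; your elaboration on why non-additivity of $F$ forces the homotopy route rather than an exact-sequence argument is a useful gloss, and the alternative cone argument you sketch is unnecessary here.
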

\begin{proof}
This is a standard fact due to Dold. 
%Namely, any functor $F\colon R\mmod\ar R\mmod$ naturally extends to its \emph{prolongation} $\tilde{F}\colon [\Delta,R\mmod]\ar [\Delta,R\mmod]$.
%The notion of a homotopy between maps of cosimplicial objects is preserved by $\tilde{F}$~\cite{Dold}.
Namely, $F:[\triangle,R\mbox{-}\mathrm{mod}]\ar [\triangle,R\mbox{-}\mathrm{mod}]$ preserves homotopy between maps of cosimplicial objects~\cite{Dold}.
On the other hand, cosimplicial homotopy maps to cochain homotopy under the Dold-Kan equivalence.
Thus $F\colon C^{\ge 0}(R\mmod)\ar C^{\ge 0}(R\mmod)$ preserves cochain homotopy.
It remains to note that a quasi-isomorphism between bounded complexes with projective terms is a homotopy equivalence.
\end{proof}

Let $A^*$ be a cochain complex of $R$-modules equipped with a coaugmentation $i:R\ar A^*$. A \emph{splitting} of $i$ is a cochain map $p:A^*\ar R$ such that $p\circ i$ is the identity of $R$.

\begin{lemma}\label{b_constr_qis}
Let $A_1^*\ar A_2^*$ be a quasi-isomorphism of bounded coaugmented non-negatively graded cochain complexes of free $R$-modules. Assume the coaugmentation of each of the complexes $A_1^*$ and $A^*_2$ has a splitting. Then the induced map $B(A_1^*)\ar B(A_2^*)$ of $\Bin_R$-algebras (see diagram~(\ref{eq:bin_pushout})) is also a quasi-isomorphism.
\end{lemma}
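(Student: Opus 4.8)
The plan is to reduce the statement to Lemma~\ref{left_quillen} by recognising that, once the coaugmentation is split, the pushout $B(A^*)$ can be written as an ordinary functor applied object-wise to $A^*$ (no derived issues), so that preserving quasi-isomorphisms becomes exactly the statement of Lemma~\ref{left_quillen}. Concretely, fix a splitting $p:A^*\to R$ of the coaugmentation $i:R\to A^*$. This splits $A^*$ as $R\oplus \bar A^*$ where $\bar A^*=\ker p$ is a bounded complex of free $R$-modules (free because it is a direct summand of a free module in each degree, and $R$ is a PID-like binomial ring — in fact for the rings we care about, $\mathbb Z_{(p)}$ or a general binomial $R$, one only needs projectivity, which suffices for Lemma~\ref{left_quillen}). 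The key algebraic observation is that $\Bin_R$ is compatible with this kind of splitting: for a coaugmented module (or complex) $R\oplus M$, one has a natural isomorphism $\Bin_R(R\oplus M)\cong \Bin_R(R)\otimes_R\Bin_R(M)$ coming from the fact that $\Bin_R$ takes coproducts of $R$-modules to coproducts (pushouts over $R$) of binomial $R$-algebras. Feeding this into the pushout square~\eqref{eq:bin_pushout} and cancelling the $\Bin_R(R)$ factor against $R$ via the evaluation-at-$1$ map, one obtains a natural isomorphism $B(A^*)\cong \Bin_R(\bar A^*)$ — that is, $B$ of a split-coaugmented complex is just $\Bin_R$ applied to the reduced part.

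With this identification in hand the proof is short. Given the quasi-isomorphism $f:A_1^*\to A_2^*$, I would first note that it need not respect the chosen splittings, so I cannot directly say it induces $\Bin_R(\bar f)$. Instead I argue as follows: the map $B(A_1^*)\to B(A_2^*)$ is induced by functoriality of the pushout construction from $f$ together with the map $\Bin_R(A_1^*)\to\Bin_R(A_2^*)$. Now apply Lemma~\ref{left_quillen} with $F=\Bin_R$: since $A_1^*,A_2^*$ are bounded complexes of free $R$-modules and $f$ is a quasi-isomorphism, $\Bin_R(f):\Bin_R(A_1^*)\to\Bin_R(A_2^*)$ is a quasi-isomorphism. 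The analogous statement holds trivially for the constant complex $R$ and for $\Bin_R(R)$. Then $B(A_i^*)=\Bin_R(A_i^*)\otimes_{\Bin_R(R)}R$, and because $\Bin_R(R)\to R$ is a surjection with the tensor product taken object-wise in cosimplicial modules, one checks that this base change is computed by an object-wise right-exact functor which, on the subcategory of degreewise-free complexes, preserves quasi-isomorphisms — again via Lemma~\ref{left_quillen}, now applied to the functor $M\mapsto \Bin_R(M)\otimes_{\Bin_R(R)}R$, or more cleanly via the splitting identification $B(A_i^*)\cong\Bin_R(\bar A_i^*)$ together with the fact that any quasi-isomorphism of split-coaugmented complexes restricts (up to the induced splitting on the target, which exists since the target is also split) to a quasi-isomorphism on reduced parts.

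The step I expect to be the main obstacle is the passage through the pushout: one must be careful that the cosimplicial tensor product $\otimes_{\Bin_R(R)}$ appearing in $B(A^*)=\Bin_R(A^*)\otimes_{\Bin_R(R)}R$ really does compute the derived tensor product when restricted to degreewise-free complexes, i.e.\ that no higher $\mathrm{Tor}$ terms intervene. The clean way around this is precisely the splitting identification $B(A^*)\cong\Bin_R(\bar A^*)$: it shows that under the running hypotheses the construction $B(-)$ is, up to natural isomorphism, an object-wise application of a single functor $R\mbox{-}\mathrm{mod}\to R\mbox{-}\mathrm{mod}$ (namely $M\mapsto\Bin_R(R\oplus M)\otimes_{\Bin_R(R)}R$, or just $\Bin_R$ after reduction), so Lemma~\ref{left_quillen} applies verbatim and the quasi-isomorphism is automatic. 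I would therefore organise the write-up so that the bulk of the work is establishing the natural isomorphism $B(A^*)\cong\Bin_R(\bar A^*)$ for a split-coaugmented $A^*$, after which the lemma follows in one line from Lemma~\ref{left_quillen}.
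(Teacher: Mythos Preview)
Your plan has a genuine gap at the naturality step. The identification $B(A^*)\cong\Bin_R(\bar A^*)$ coming from a splitting $p$ is \emph{not} natural in maps of coaugmented complexes: it depends on the choice of $p$, and $f$ need not intertwine the two chosen splittings. Concretely, write $f|_{\bar A_1^*}=(*,g)$ with $*\colon\bar A_1^*\to R$ and $g\colon\bar A_1^*\to\bar A_2^*$; tracing through the pushout one finds that under the two identifications $B(f)$ sends a generator $x\in\bar A_1^*$ to $*(x)+g(x)\in\Bin_R(\bar A_2^*)$, not to $g(x)$. So $B(f)$ is \emph{not} $\Bin_R$ of any map $\bar A_1^*\to\bar A_2^*$, and your sentence ``the construction $B(-)$ is, up to natural isomorphism, an object-wise application of a single functor $R\mbox{-}\mathrm{mod}\to R\mbox{-}\mathrm{mod}$'' is false. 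The alternative you float, applying Lemma~\ref{left_quillen} to $M\mapsto\Bin_R(M)\otimes_{\Bin_R(R)}R$, fails for the reason you yourself flag: this is not a functor of the $R$-module $M$, since the $\Bin_R(R)$-module structure on $\Bin_R(M)$ comes from the coaugmentation, which is extra data. Dold's homotopy-preservation argument behind Lemma~\ref{left_quillen} then does not apply, because the cosimplicial homotopies witnessing that $f$ is a homotopy equivalence involve maps that are not base-point preserving.

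The paper's fix is exactly the missing ingredient: use the binomial filtration $B_{\le\bullet}$ on $B$ (from Lemma~\ref{lemma:bin_vs_canonical_2}). The map $B(f)$ is filtered, and on the associated graded it becomes $\Gamma(A_1^*/R)\to\Gamma(A_2^*/R)$, where $A_j^*/R$ is the cokernel of the coaugmentation. The point is that $A^*\mapsto A^*/R$ \emph{is} natural in coaugmented maps (no splitting needed for the definition), the splitting is used only to see that each $A_j^*/R$ is degreewise projective, and $\Gamma$ is an honest endofunctor of $R\mbox{-}\mathrm{mod}$. Now Lemma~\ref{left_quillen} applies to the quasi-isomorphism $A_1^*/R\to A_2^*/R$, and a filtration (spectral sequence) comparison finishes the proof. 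In effect, the extra ``$*(x)$'' term that breaks your naturality lives in strictly lower filtration and is killed upon passing to the associated graded; that is precisely what the filtration argument exploits.
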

\begin{proof}
The induced map of the associated graded 
complexes with respect to the binomial filtration from Lemma~\ref{lemma:bin_vs_canonical_2}
is equal to $\Gamma(A^*_1/R)\ar \Gamma(A^*_2/R)$.
It is a quasi-isomorphism by Lemma~\ref{left_quillen} applied to the
quasi-isomorphism $A^*_1/R\ar A^*_2/R$, hence so is $B(A_1^*)\ar B(A_2^*)$.
\end{proof}

\begin{lemma}\label{splitting}
Let $A^*$ be a bounded coaugmented non-negatively graded cochain complex of free $R$-modules. Assume the coaugmentation of $A^*$ admits a splitting $p\colon A^*\ar R$. 

Then the induced composite map $\Bin(\ker p)\ar \Bin(A^*)\ar B(A^*)$ is a quasi-isomorphism.
\end{lemma}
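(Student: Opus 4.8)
The plan is to observe that the splitting actually makes the map of the statement an \emph{isomorphism} of cosimplicial binomial $R$-algebras, and in particular a quasi-isomorphism.

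First I would use $p$ to split off the coaugmentation. From $p^0\circ i^0=\mathrm{id}_R$ one gets a decomposition $A^0=i^0(R)\oplus\ker p^0$ of $R$-modules, while in degrees $n\ge 1$ the map $p^n$ is zero (its target, $R$ placed in degree $0$, vanishes there), so $(\ker p)^n=A^n$. Since $i$ and $p$ are cochain maps and $R$ sits in degree $0$ with zero differential, the differential of $A^*$ respects this splitting; hence
\[
A^*\ \cong\ R\ \oplus\ \ker p
\]
as coaugmented cochain complexes of $R$-modules, with the coaugmentation the inclusion of the first summand (which carries the zero differential), and $\ker p$ a bounded complex of projective $R$-modules.

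Next I would feed this decomposition into the construction of $B(A^*)$. Working degreewise in the cosimplicial direction and using that $\Bin$, being a left adjoint, carries the direct sum $R\oplus\ker p$ (the coproduct in $R$-modules) to the coproduct of binomial $R$-algebras, one obtains
\[
\Bin(A^*)\ \cong\ \Bin(R)\ \sqcup\ \Bin(\ker p),
\]
under which $\Bin$ of the coaugmentation $R\ar A^*$ is the coproduct inclusion of the first factor. Since $B(A^*)$ is the pushout of $\Bin(A^*)\al\Bin(R)\ar R$, and since pushing out along a map out of a coproduct factor ``absorbs'' that factor — the pushout of $Y\al X\ar X\sqcup Z$ along the coproduct inclusion is $Y\sqcup Z$ — we get
\[
B(A^*)\ \cong\ R\ \sqcup\ \Bin(\ker p)\ =\ \Bin(\ker p),
\]
the last equality because $R$ is the initial binomial $R$-algebra. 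Unwinding these identifications one checks that the composite $\Bin(\ker p)\ar\Bin(A^*)\ar B(A^*)$ of the statement is exactly the inverse of the displayed isomorphism.

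Conceptually this is just the statement that $B$ is the left adjoint to the functor sending a cosimplicial binomial $R$-algebra to its underlying coaugmented complex of $R$-modules, so that $B(A^*)$ is the free binomial $R$-algebra generated by $A^*$ \emph{relative to} its coaugmentation; when $A^*$ splits, the only ``new'' generators are those of $\ker p$, which is the claim. I do not expect a genuine obstacle here: the two points requiring attention are that the splitting of $A^*$ is one of \emph{complexes} (guaranteed by the cochain-map hypotheses on $i$ and $p$) and, above all, the bookkeeping that identifies the composite map of the statement with the isomorphism extracted from the absorption argument — a short but necessary diagram chase. The boundedness and freeness of $A^*$ are used only to ensure that $\ker p$ is a bounded complex of projectives.
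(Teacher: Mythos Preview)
Your proposal is correct and follows exactly the same route as the paper's proof: the paper writes ``Since $A^*\cong R[0]\oplus\ker p$, we have $B(A^*)\cong \Bin(\ker p)$,'' and your argument is precisely the unpacking of that one line, using that $\Bin$ (as a left adjoint) takes the direct sum to a coproduct of binomial $R$-algebras and that $R$ is initial there. The only thing to keep in mind is that $\Bin$ and $B$ are defined via Dold--Kan, so the degreewise isomorphism you exhibit is really an isomorphism of cosimplicial objects; this is harmless since the splitting $A^*\cong R\oplus\ker p$ transports through the Dold--Kan equivalence.
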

\begin{proof}
Since $A^*\cong R[0]\oplus\ker p$, we have $B(A^*)\cong \Bin(\ker p)$.
\end{proof}
\begin{remark}\label{torsionfree}
It will be convenient to apply the above lemmas to the 
larger category of all torsion free $R$-modules.
Namely, for a torsion free group $A\in \Ab^{tf}$ one can let 
$\Bin(A)=\colim_{L\ar A}\Bin(L)$, where $L\ar A$ runs over all free Abelian groups and maps to $A$. 
It follows that, if $A^*\ar B^*$ is a quasi-isomorphism of complexes of torsion free Abelian groups, then 
$\Bin(A^*)\ar \Bin(B^*)$ is a quasi-isomorphism as well.
\end{remark}

\iffalse
Using the exact triangle
\begin{equation}\label{eq:torus_truncation}
        \Z\xrightarrow{p^*} \tau_{\le 1}C^*(T,\Z)\ar H^1(T,\Z)[-1]\ar.
\end{equation}
we define $B(\tau_{\le 1}C^*(T,\Z))=
\Bin_R(\tau_{\le 1}C^*(T,\Z))\otimes_{\Bin_R(\Z)}\Z$ \ref{rmrk:B_construction}.
\fi
Let $T'$ be a triangulated torus. Using the triangulation we can view $T'$ as a simplicial set. The $R$-valued functions on the simplices become a cosimplicial $R$-algebra, which is binomial as $R$ is. By the Dold-Kan correspondence we can view this as an algebra over the monad $\Bin_{R}$ on the category $C^{\geq 0}(R)$ with the underlying cochain complex being $C^*(T',R)$.

Using the natural coaugmentation $R\ar \tau_{\leq 1} C^*(T',R)$ we construct the algebra $B(\tau_{\le 1}C^*(T',R))$, see diagram~\ref{eq:bin_pushout} and Remark \ref{rmrk:B_construction}. We have a commutative square of $\Bin_{R}$-algebras in $C^{\geq 0}(R)$
\begin{equation}\label{tau_leq_1_to_c}
\begin{tikzcd}
        \Bin_R(R)\arrow[d]\arrow[r] & \Bin_R(\tau_{\le 1}C^*(T',R))\arrow[d]\\
        R\arrow[r] & C^*(T',R)
\end{tikzcd}
\end{equation}
where the left arrow $\Bin_R(R)\ar R$ is the structure map of the binomial algebra $\Bin_R R$. Diagram~(\ref{tau_leq_1_to_c}) gives us a map $B(\tau_{\le 1}C^*(T',R))\ar C^*(T',R)$ (again of $\Bin_R$-algebras in $C^{\geq 0}(R)$).
\begin{lemma}\label{lemma:bin_model_of_torus}
        The map
        $$B(\tau_{\le 1}C^*(T',R))\ar 
        C^*(T',R)$$
        is a quasi-isomorphism.
\end{lemma}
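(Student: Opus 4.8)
The plan is to reduce the claim to the free (polynomial-generator) case via the filtration machinery already set up, and there to identify both sides with the divided-power algebra on $H^1(T',R)[-1]$. Concretely, I would proceed as follows.

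First, recall the coaugmentation $i\colon R\ar \tau_{\le 1}C^*(T',R)$ and observe that since $T'$ is a (non-empty) triangulated torus, $H^0(T',R)=R$, $H^1(T',R)$ is free, and the map $\tau_{\le 1}C^*(T',R)\ar C^*(T',R)$ is a quasi-isomorphism onto $\tau_{\le 1}$ of the target by construction, so it suffices to show $B(\tau_{\le 1}C^*(T',R))\ar C^*(T',R)$ is a quasi-isomorphism, and for this I may replace $\tau_{\le 1}C^*(T',R)$ by any quasi-isomorphic coaugmented bounded complex of free $R$-modules whose coaugmentation splits. If $R=\Z_{(p)}$ (or more generally whenever the canonical filtration on the two-term complex $\tau_{\le 1}C^*(T',R)$ splits), choosing such a splitting gives, by Lemma~\ref{b_constr_qis} and Lemma~\ref{splitting}, an identification $B(\tau_{\le 1}C^*(T',R))\simeq \Bin_R(H^1(T',R)[-1])$, compatible with the comparison map to $C^*(T',R)$.

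Second, I would compute $H^*(\Bin_R(H^1(T',R)[-1]))$ using exactly the argument of Lemma~\ref{lemma:bin_vs_canonical}: the binomial filtration has associated graded the divided-power functors $\Gamma^m$, the d\'ecalage isomorphism $\Gamma^m(H^1[-1])\cong \Lambda^m(H^1)[m]$ concentrates the $E_1$-page on one line, the spectral sequence degenerates, and one obtains $H^*(\Bin_R(H^1(T',R)[-1]))\cong \Lambda^*_R(H^1(T',R))$ as a graded algebra. On the other hand $H^*(C^*(T',R))=H^*(T',R)\cong \Lambda^*_R(H^1(T',R))$ since $T'$ is a torus and $H^1$ is free. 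It remains to check that the comparison map induces this isomorphism, which it does because both identifications are the canonical ones: in degree $1$ the map $\Bin_R(H^1[-1])\ar C^*(T',R)$ sends the generators $H^1(T',R)$ to their representing cocycles, and the ring structures are determined by degree $1$ via the exterior-algebra / divided-power relations. Functoriality in $R$ (and Remark~\ref{torsionfree}) lets me pass from $R=\Z_{(p)}$ to an arbitrary binomial ring by tensoring, since everything in sight is a bounded complex of free modules and $\Bin_R(A)=\Bin(A)\otimes_\Z R$ for free $A$; alternatively one runs the splitting argument directly whenever it is available, and for a general binomial $R$ one argues as in Lemma~\ref{lemma:bin_vs_canonical_2}, filtering $B(\tau_{\le 1}C^*(T',R))$ and comparing associated graded pieces via Lemma~\ref{left_quillen}.

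The main obstacle I anticipate is the last point: for a general binomial ring $R$ the canonical filtration on the two-term complex $\tau_{\le 1}C^*(T',R)$ need not split, so one cannot immediately reduce to $\Bin_R(\ker p)$. The clean way around this is to avoid splitting the complex $C^*(T',R)$ itself and instead work on associated graded with respect to the binomial (equivalently, by Lemma~\ref{lemma:bin_vs_canonical_2}, the $B_{\le}$) filtration: the associated graded of $B(\tau_{\le 1}C^*(T',R))$ is $\Gamma_R$ applied to the (free, acyclic-in-degree-$0$-modulo-$R$) quotient $\tau_{\le 1}C^*(T',R)/R$, and by Lemma~\ref{left_quillen} this only depends on the quasi-isomorphism type of that quotient, which is $H^1(T',R)[-1]$. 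So the comparison map is a filtered quasi-isomorphism, hence a quasi-isomorphism. I would spell this last reduction out carefully, as it is where the binomial-vs-canonical filtration bookkeeping from Lemmas~\ref{lemma:bin_vs_canonical}--\ref{lemma:bin_vs_canonical_2} does the real work.
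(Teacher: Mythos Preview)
Your overall strategy matches the paper's: reduce $B(\tau_{\le 1}C^*(T',R))$ to $\Bin_R(H^1(T',R)[-1])$ and then invoke the computation of Lemma~\ref{lemma:bin_vs_canonical} to identify the cohomology with $\Lambda^*_R(H^1(T',R))\cong H^*(T',R)$. That part is fine and is exactly what the paper does.

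Where you diverge is in your treatment of general $R$. You worry that for a non-local binomial ring the canonical filtration on $\tau_{\le 1}C^*(T',R)$ might not split, and you propose a detour through the $B_{\le}$ filtration. This concern is misplaced: the coaugmentation $R\to C^*(T',R)$ \emph{always} splits, for any $R$, via evaluation $\ev_x$ at any vertex $x$ of the triangulation. This is the observation the paper makes in its first sentence of the proof. Once you have the splitting $\ev_x$, Lemma~\ref{splitting} gives $B(\tau_{\le 1}C^*(T',R))\cong \Bin_R(\ker\ev_x)$ on the nose, and since $\ker\ev_x$ is a bounded complex of free $R$-modules (the degree-zero part is $R^{V\setminus\{x\}}$, and $Z^1(T',R)=Z^1(T',\Z)\otimes R$ is free because $B^2(T',\Z)$ is) which is quasi-isomorphic to $H^1(T',R)[-1]$, Lemma~\ref{left_quillen} finishes the reduction. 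No case distinction on $R$ is needed.

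Your proposed workaround via filtrations is also not quite complete as written: you assert that ``the comparison map is a filtered quasi-isomorphism,'' but you have not specified a filtration on the target $C^*(T',R)$ compatible with the $B_{\le}$ filtration on the source, nor explained why the map respects it. This could likely be fixed, but it is unnecessary once you use $\ev_x$.
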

\begin{proof}
Any point $x\in T'$ defines a splitting $\ev_x\colon C^*(T',R)\ar R$ obtained by evaluating at $x$.
        \iffalse
        Thus $\ev_x$
        provides a decomposition
        $\tau_{\le 1}C^*(T',\Z)\cong H^0(T',\Z)\oplus H^1(T',\Z)[-1]$ in the derived category of Abelian groups, in particular
        \fi
        Applying Lemma~\ref{splitting} we conclude that there is a quasi-isomorphism of binomial algebras
        $s_x\colon B(\tau_{\le 1}C^*(T',R))\cong \Bin_R(H^1(T',R)[-1])$.
        Note that $s_x$ is filtered with respect to the binomial filtration described in Lemma~\ref{lemma:bin_vs_canonical_2}.
        %Let $A=H^1(T',R)$.
        The proof of the lemma implies that the map 
        $H^*(B(H^1(T',R)[-1]))\ar H^*(T',R)$ is an isomorphism. %of the Grassmann algebras $\Lambda^*(H^1(T';R))$.
\iffalse
        The binomial filtration induces a spectral sequence
        $$E^{pq}_1=H^{p+q}(\Gamma^{-p}(A[-1]))\implies H^{p+q}(\Bin_R(A[-1])),$$
        where $\Gamma^n$ is the $n$-th divided powers functor; it is naturally isomorphic to $\Bin_{R,\le n}/\Bin_{R,\le n-1}$.
        The d\'ecalage isomorphism $\Gamma^{-p}(A[-1])\cong \Gamma^{-p}(A)[p]$ \cite{Illusie}
        gives $E^{pq}_1=H^{2p+q}(\Lambda^{-p}A)$, so the terms $E^{pq}_1$ vanish outside $2p+q=0$, and
        hence the spectral sequence degenerates at $E_1$.
        We conclude that the monoid unit $A[-1]\ar \Bin_R(A[-1])$ is an isomorphism on $H^1(-)$ and
        $H^*(\Bin_R(A[-1]))$ is isomorphic to the Grassmann algebra $\Lambda^*(A)$ and hence to $H^*(T,R)$.
\fi

\end{proof}
\begin{remark}
In fact free binomial algebras model cochains of Eilenberg-Maclane spaces.
For example, there is a natural quasi-isomorphism $\Bin(A[-n])\ar C^*_{\mathrm{sing}}(K(A^{\vee},n),\Z)$ for any $n>0$ and finitely generated free Abelian group $A$, see~\cite{ekedahl}.
\end{remark}

\medskip

{\bf Proof of Theorem~\ref{main}.} The proof of Theorem~\ref{rational} given above does not extend in a straightforward way to $\Z_{(p)}$ coefficients, but it gives us an idea of what to look for. Let us triangulate $T=\R^n/\Z^n$ so that
\begin{itemize}
\item the preimage in $\R^n$ of every simplex is a disjoint union of genuine affine simplices with all coordinates of each vertex being in $\Z_{(p)}$;
\item each subtorus $T_i$ has a closed polyhedral tubular neighbourhood $X_i$ such that for every non-empty $I\subset\{1,\ldots,k\}$ 
\begin{itemize} 
\item the inclusion $T_I\ar X_I$ is a homotopy equivalence, and 
\item for every path component $T'$ of $T_I$, the corresponding path component of $X_I$ fits into the tubular neighbourhood $U$ from the definition of $A(T',\Z_{(p)})$.
\end{itemize}
\end{itemize}

Let $T'$ be a path component of some $T_I,I\subset\{1,\ldots, k\}$, and let $X'$ be the corresponding path component of $X_I$. We have a cochain map 
\begin{equation}\label{compar}
A(T',\Z_{(p)})\ar C^*(X',\Z_{(p)})
\end{equation}
which induces a quasi-isomorphism $A(T',\Z_{(p)})\ar \tau_{\leq 1}C^*(X',\Z_{(p)})$.

\iffalse
Using the triangulation of $T$ we can view $T$ and any of its subpolyhedra as simplicial sets. The $\Z_{(p)}$-valued functions on the simplices become a cosimplicial $\Z_{(p)}$-algebra, which is binomial as $\Z_{(p)}$ is. By the Dold-Kan correspondence we can view this as an algebra over the monad $\Bin_{\Z_{(p)}}$ on the category $C^{\geq 0}(\Z_{(p)})$ with the underlying cochain complex being $C^*(X',\Z_{(p)})$. 
\fi
We can view $C^*(X',\Z_{(p)}$ as an algebra over the monad $\Bin_{\Z_{(p)}}$ in $C^{\geq 0}(\Z_{(p)})$, cf.\ the remarks after Lemma~\ref{splitting} and before Lemma~\ref{lemma:bin_model_of_torus}. The map~(\ref{compar}) induces a map of $\Bin_{\Z_{(p)}}$-algebras
$\Bin(A(T',\Z_{(p)}))\ar C^*(X',\Z_{(p)})$,
which in turn induces a map of $\Bin_{\Z_{(p)}}$-algebras 
\begin{equation}\label{compar1}
B(T',\Z_{(p)})\ar C^*(X',\Z_{(p)}).
\end{equation}

\iffalse
\begin{lemma}\label{qis_binomial}
This map is a quasi-isomorphism.
\end{lemma}
\begin{proof}
{\bf TODO}
\end{proof}
\fi
This map factorises as $$B(T',\Z_{(p)})=B(A^*(T',\Z_{(p)}))\ar B(\tau_{\leq 1} C^*(T',\Z_{(p)}))\ar C^*(X',\Z_{(p)}).$$ The coaugmentation of $A^*(T',\Z_{(p)})$ has a splitting, as does the coaugmentation of $\tau_{\leq 1} C^*(T',\Z_{(p)})$. In the latter case the splitting is given by evaluating at any point $x\in T'$. So we can use Lemmas~\ref{b_constr_qis} and~\ref{lemma:bin_model_of_torus} to conclude that~(\ref{compar1}) is a quasi-isomorphism. The maps~(\ref{compar1}) induce a map of cocubical cochain complexes
$$\{B(T_I,\Z_{(p)})\}\ar \{C^*(T_I,\Z_{(p)})\}$$ which we have just seen to be a quasi-isomorphism on every cubical component. We now complete the proof of Theorem~\ref{main} by applying spectral sequence~(\ref{spseq}) and Lemma~\ref{standard_complexes}.$\square$

\medskip

{\bf Proof of Theorem~\ref{main1}.} Let us take an arbitrary triangulation of $T$ such that each $T_i$ is a subpolyhedron. We let $C^*(-,R)$ denote the corresponding simplicial cochain functor. Since $T_{\{1,\ldots, k\}}$ is assumed path connected, in particular non-empty, we can choose an $x\in T_{\{1,\ldots, k\}}$. Let $I\subset\{0,\ldots, k\}$. We set $\ev_x:C^*(T_I,R)\ar R$ be the evaluation map at $x$, which splits the coaugmentation $R\ar C^*(T_I,R)$. We have the following arrows:
\begin{equation}\label{factorisation_2nd_proof}
\Bin_R(H^1(T_I,R)[-1])\gets\Bin_R(\ker\ev_x)\ar\Bin_R(\tau_{\leq 1} C^*(T_I,R))\ar B(\tau_{\leq 1} C^*(T_I,R))\ar C^*(T_I,R).
\end{equation}
Here the arrow on the left is induced by the cochain map $\ker\ev_x\ar H^1(T_I,R)[-1]$ and is a quasi-isomorphism by Lemma~\ref{left_quillen}. The composition of the next two arrows is a quasi-isomorphism by Lemma~\ref{splitting}, and the last arrow is a quasi-isomorphism by Lemma~\ref{lemma:bin_model_of_torus}. The maps~(\ref{factorisation_2nd_proof}) induce maps of the corresponding cocubical cochain complexes, and we finish the proof using spectral sequence~(\ref{spseq}) and Lemma~\ref{standard_complexes} again.$\square$

\medskip

{\bf Proof of Proposition~\ref{bound_on_torsion}.} Triangulate $T$ so that every $T_i$ becomes a subpolyhedron. Let $(E^{p,q}_r,d_r)$ be the spectral sequence~(\ref{spseq}) for the cocubical cochain complex $\{C^*(T_I,\Z)\}$ constructed using the family $T_1,\ldots T_k$ of subpolyhedra. Observe that the complex $(E_1,d_1)$ can be identified with $\{H^*(T_I,\Z)\}$. Also, all differentials $d_{>1}$ are torsion since the same spectral sequence over $\Q$ degenerates at $E_2$ (meaning all differentials starting from the second one are zero) by the proof of Theorem~\ref{rational}. 

So all torsion present in $E_2$ is also present in $H^*(\{H^*(T_I,\Z)\})$, and the higher differentials $d_{>1}$ cannot create any new torsion. Finally, passing from $E_\infty$ to cohomology may require non-trivial extensions, but this does not affect the cardinality of the torsion. This proves Proposition~\ref{bound_on_torsion}.$\square$

\section{Proof of Theorem~\ref{main2}}\label{more_proofs}
%Here $R=\mb{Z}_{(p)}$ or $R=\mb{Q},\mb{R}$. %Recall that a complex $K^*$ in an Abelian category is \emph{formal} if it is quasi-isomorphic to $H^*(K^*)$ via a zig-zag of quasi-isomorphisms that induces the identity on the cohomology.
In this section the ground ring is $\Z$.

\subsection{\texorpdfstring{Comparing $\Bin$ with $\Gamma$}{}}

{\bf Preliminary observations.} Let $\Fin_*$ be the category of finite based sets.
Given $S\in \Fin_*$, let $\mb{Z}\langle S/*\rangle$ denote the quotient of $\mb{Z}\langle S\rangle$, the free Abelian group generated by $S$, by $\mb{Z}\cdot *$ where $*\in S$ is the base point.
The essential image of the functor 
$\mb{Z}\langle-/*\rangle   \colon \Fin_*\ar \Ab$
will be denoted by $\Ab'$: this is the subcategory of Abelian groups with 
a chosen (unordered) basis subset equal to $S\setminus\{*\}$, and with the morphisms 
$\Ab'$ being the $\Z$-linear maps which take a basis element to zero or to another basis element.

The opposite category $\Fin^{\op}_*$ admits a similar description.
There is a functor $S\ar \mb{Z}^{S/*}$ from $\Fin^{\op}_*$ to $\Ab$,
which maps a based set $S$ to the integer-valued functions on $S$ vanishing at $*\in S$.
The essential image of this functor will be denoted by $\Ab''$.
Let $\Bin''$ and $\Gamma''$ be the restrictions of the functors 
$\Bin(-)$ and $\Gamma(-)$ to $\Ab''$.
\begin{prop}\label{bin''_gamma''}
There is a natural isomorphism of functors $\Bin''\simeq \Gamma''$.
\end{prop}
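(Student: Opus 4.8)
The plan is to exhibit the natural isomorphism on the nose on objects of the form $\mathbb{Z}^{S/*}$ and then check naturality against the (generating) morphisms of $\Fin_*^{\op}$. First I would recall the explicit description of the free binomial ring: for a finite set $S$ with basepoint $*$, writing $M=\mathbb{Z}^{S/*}$, the ring $\Bin(M)$ is the ring of integer-valued polynomials on $M$, with additive basis given by the products $\prod_{s\in S\setminus\{*\}}\binom{t_s}{l_s}$ over tuples $(l_s)_{s}$ of non-negative integers (here $t_s$ is the coordinate function dual to $s$). Grading by total degree $\sum_s l_s=m$ gives the subquotient $\Bin_{\le m}(M)/\Bin_{\le m-1}(M)\cong\Gamma^m(M)$ of Remark 2.8 of~\cite{ksz}; summing over $m$ we get a \emph{canonical isomorphism of graded groups} $\gr\Bin(M)\cong\Gamma(M)$. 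The content of the proposition is that, when restricted to $\Ab''$, this graded isomorphism can be promoted to an (ungraded) natural isomorphism $\Bin''\cong\Gamma''$, i.e.\ the splitting of the binomial filtration can be chosen functorially in $\Fin_*^{\op}$.

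The key observation is that on $\Ab''$ there is a \emph{preferred} functorial splitting. For $M=\mathbb{Z}^{S/*}$, define the idempotent operators $e_s\colon\Bin(M)\to\Bin(M)$ for each $s\in S\setminus\{*\}$ by ``taking the highest binomial part in the variable $t_s$''; equivalently, use the classical fact that $\Bin(\mathbb{Z})=\bigoplus_l\mathbb{Z}\binom{t}{l}$ and the comultiplication coming from $\Delta\colon\mathbb{Z}\to\mathbb{Z}\oplus\mathbb{Z}$, so that $\Bin(M)\cong\bigotimes_{s}\Bin(\mathbb{Z})$ is a tensor product of \emph{graded} (indeed bialgebra) factors, one per non-basepoint element of $S$. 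This tensor decomposition is manifestly natural with respect to the morphisms of $\Fin_*^{\op}$, which by the description of $\Ab''$ are exactly the maps induced by based maps $S'\to S$, acting on $\mathbb{Z}^{S/*}$ by pullback and hence on $\bigotimes_s\Bin(\mathbb{Z})$ by ``duplicate/delete/relabel tensor factors'' (the bialgebra structure of $\Bin(\mathbb{Z})$ providing precisely the multiplication needed when two generators collide, and the counit when a generator maps to the basepoint). The total-degree grading is respected by all of these, so the associated graded splitting $\Bin(M)\cong\bigoplus_m\Gamma^m(M)=\Gamma(M)$ is natural. I would then identify the degree-$m$ piece with $\Gamma^m(M)=(M^{\otimes m})^{\Sigma_m}$, i.e.\ with the standard presentation of $\Gamma''$ via the same tensor-of-divided-powers description $\Gamma(\mathbb{Z})=\bigoplus_l\mathbb{Z}\gamma_l(t)$, and observe that the two tensor decompositions match factor by factor under $\binom{t}{l}\leftrightarrow\gamma_l(t)$, including on the bialgebra structure maps. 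Checking this compatibility on the generating morphisms of $\Fin_*^{\op}$ (the based injections, the based surjection collapsing two points, and the map sending a point to the basepoint) reduces to a finite, purely combinatorial computation with binomial coefficients and divided powers.

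The main obstacle I anticipate is purely bookkeeping: making the identification of $\Bin(M)$ with $\bigotimes_{s\in S\setminus\{*\}}\Bin(\mathbb{Z})$ genuinely functorial, i.e.\ pinning down how a based map $f\colon S'\to S$ acts on the tensor factors and verifying that the binomial bialgebra structure on $\Bin(\mathbb{Z})$ (with $\binom{t}{l}$ the divided-power-type generators) is exactly the one that makes this compatible with the analogous tensor description of $\Gamma$. Once that dictionary is set up, naturality is immediate because both sides are built by the same functor $S\mapsto\bigotimes_{S\setminus\{*\}}(\text{a fixed graded bialgebra})$ out of $\Fin_*^{\op}$, and the isomorphism is induced by the single bialgebra isomorphism $\Bin(\mathbb{Z})\cong\Gamma(\mathbb{Z})$ sending $\binom{t}{l}\mapsto\gamma_l$. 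I would close by remarking that the hypothesis that we restrict to $\Ab''$ (rather than all of $\Ab$, or all torsion-free groups) is essential: it is precisely the constraint that makes every morphism ``monomial'' in the chosen basis, which is what allows the degree grading — and hence the splitting of the binomial filtration — to be preserved.
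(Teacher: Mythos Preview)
Your proposal is correct and follows essentially the same route as the paper's proof: both identify $F(\mathbb{Z}^{S/*})\cong\bigotimes_{s\in S\setminus\{*\}}F(\mathbb{Z})$ for $F=\Bin,\Gamma$, observe that the effect of a based map on this tensor decomposition is governed by the coaugmented coalgebra structure on $F(\mathbb{Z})$, and conclude by checking that $\Bin(\mathbb{Z})$ and $\Gamma(\mathbb{Z})$ have the same comultiplication formula under $\binom{t}{l}\leftrightarrow\gamma_l$. One small terminological slip: where you write ``the multiplication needed when two generators collide'' you mean the \emph{comultiplication} --- a based map $g\colon S'\to S$ with $|g^{-1}(s)|>1$ induces the diagonal $\mathbb{Z}\to\mathbb{Z}^{g^{-1}(s)}$, and hence the iterated comultiplication on the tensor factor $F(\mathbb{Z}\cdot s)$; the counit appears when $g^{-1}(s)$ is empty, not when a generator of $S$ hits the basepoint.
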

\begin{proof}
Let $F$ denote either $\Bin''$ or $\Gamma''$.
The diagonal map $\mb{Z}\ar \mb{Z}\oplus \mb{Z}$ is a morphism in $\Ab''$. It
induces a coalgebra structure on the Abelian group $F(\mb{Z})$.
Similarly, the morphism $0\ar \mb{Z}$ yields the coagmentation $\mb{Z}\ar F(\mb{Z})$.

We claim that the functor $F$ is determined by the coaugmented coalgebra $F(\mb{Z})$.
Namely, there is a natural identification $F(\mb{Z}^{S/*})\simeq \otimes_{s\in S\setminus\{*\}}
F(\mb{Z}\cdot s)$ induced by the commutative algebra structure on $F(\mb{Z}^{S/*})$ and the functoriality
along the maps $(\{s,*\},*)\ar (S,*)$.
Explicitly, $\Bin(\mb{Z}^{S/*})$ is spanned by the binomial coefficients $\prod_{s\in S\setminus\{*\}}
\binom{x_s}{n_s}$, while $\Gamma(\mb{Z}^{S/*})$ is freely spanned by the divided powers
$\prod_{s\in S\setminus\{*\}}y^{[n_s]}_s$ where $x_s$ and $y_s$ correspond to the delta functions on $S$ at $s\in S\setminus\{*\}$.
A map $g\colon (S,*)\ar (S',*)$ of based sets induces 
a map $G\colon F(\mb{Z}^{S'/*})\ar F(\mb{Z}^{S/*})$ which, in terms of the 
identification above, is determined
by the coalgebra structure on $F(\mb{Z})$.
Namely, for all $s'\in S'\setminus \{*\}$ we have 
$$G(x_{s'})=\sum_{f(s)=s'}x_s,$$ 
where the summation is over the preimage of $s'$; similarly for 
$y_{s'}$.
The coalgebras $\Bin(\mb{Z})$ and $\Gamma(\mb{Z})$ are isomorphic: the comultiplication
formula for $\binom{x}{n}$ and $x^{[n]}$ is the same.
Hence the map which sends $\prod_{s\in S\setminus\{*\}}
\binom{x_s}{n_s}$ to $\prod_{s\in S\setminus\{*\}}y^{[n_s]}_s$ defines 
an isomorphism $\Bin(\mb{Z}^{S/*})\simeq \Gamma(\mb{Z}^{S/*})$
which is functorial in $(S,*)$.
In other words, the functors $\Bin''$ and $\Gamma''$ are isomorphic.
\end{proof}

\medskip

{\bf A splitting of certain binomial diagrams.} Let $\Spaces^{ft}_*$ denote the category of based simplicial sets with finitely many simplices in each degree. Here we call such objects simply \emph{spaces}.
Assume $S\colon \mathcal{C}\ar \Spaces^{ft}_*$ is a diagram of spaces such that for all $d\in \mathcal{C}$,  the space $S_d$ has the integral homology of a wedge of circles.
It is well known that the associated graded with respect to the binomial filtration on $\Bin$, provided by Lemma~\ref{lemma:bin_vs_canonical}, is isomorphic to the divided powers functor $\Gamma$ (cf.~\cite{ksz}).
\begin{prop}\label{prop:gamma_bin}
There exists a natural equivalence of diagrams 
$\Gamma(H^1(S_d)[-1])\zgqis \Bin(H^1(S_d)[-1])$ of shape~$\mathcal{C}^{\op}$.
\end{prop}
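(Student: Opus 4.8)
\textbf{Proof plan for Proposition~\ref{prop:gamma_bin}.}

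The plan is to reduce the statement to Proposition~\ref{bin''_gamma''} by replacing each $H^1(S_d)$ with a functorial model built from based finite sets. First I would observe that for a space $S_d$ with the homology of a wedge of circles, $H^1(S_d;\Z)$ is a free Abelian group, but the assignment $d\mapsto H^1(S_d)$ need not a priori land in the subcategory $\Ab''$ of the excerpt (whose morphisms send basis elements to basis elements or to zero). So the first, and I expect the main, step is to produce a \emph{functorial} presentation of the diagram $d\mapsto H^1(S_d)[-1]$ by diagrams of shape $\mathcal{C}^{\op}$ valued in $\Ab''$ — concretely, a zig-zag of natural quasi-isomorphisms connecting $H^1(S_d)[-1]$ (as a two-term complex in degrees $0,1$, or just as a complex concentrated in degree $1$) to something of the form $\Z^{W_d/*}$ for a based-set-valued functor $d\mapsto W_d$. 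The natural candidate for $W_d$ is the pointed set of $1$-cells (or of edges modulo a spanning tree) in a functorial CW/simplicial model; since $S_d$ is already a based simplicial set, one can take $W_d$ to be the set of non-degenerate $1$-simplices with the degenerate edge at the basepoint playing the role of $*$, and the simplicial structure maps do induce $\Fin_*$-morphisms. One then has a natural map from the reduced simplicial chain complex $\tilde C_*(S_d)$ onto $H_1$, dualizes to get a map into $H^1$, and checks this is a quasi-isomorphism after truncation — using that the homology is concentrated in degree $1$.

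Once such a zig-zag $H^1(S_d)[-1] \zgqis \Z^{W_\bullet/*}[-1]$ of diagrams of shape $\mathcal{C}^{\op}$ is in hand, I would apply the functor $\Bin$ (extended object-wise to $C^{\ge 0}(\Z)$ and to diagrams) to the whole zig-zag. By Lemma~\ref{left_quillen} (and Remark~\ref{torsionfree} for the torsion-free enlargement), $\Bin$ preserves quasi-isomorphisms between bounded complexes of free modules, so we get a zig-zag of quasi-isomorphisms $\Bin(H^1(S_d)[-1]) \zgqis \Bin(\Z^{W_\bullet/*}[-1])$. The same applies verbatim with $\Gamma$ in place of $\Bin$ (the relevant formal properties — preservation of homotopies, hence quasi-isomorphisms of bounded free complexes — hold for any additive-ish polynomial functor, in particular $\Gamma$, by Dold's theorem as invoked in Lemma~\ref{left_quillen}). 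This yields $\Gamma(H^1(S_d)[-1]) \zgqis \Gamma(\Z^{W_\bullet/*}[-1])$.

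Now the point is that $\Bin(\Z^{W_d/*}[-1])$ and $\Gamma(\Z^{W_d/*}[-1])$ are computed by applying $\Bin''$ and $\Gamma''$ respectively to the corresponding cosimplicial object (Dold--Kan image of $\Z^{W_d/*}[-1]$), degreewise: in each cosimplicial degree the group is a finite sum of copies of $\Z^{W_d/*}$, an object of $\Ab''$, and the cosimplicial structure maps are $\Ab''$-morphisms because they come from $\Fin_*$-maps. By Proposition~\ref{bin''_gamma''} the natural isomorphism $\Bin''\simeq\Gamma''$ then gives a natural isomorphism $\Bin(\Z^{W_d/*}[-1])\cong\Gamma(\Z^{W_d/*}[-1])$ of diagrams of shape $\mathcal{C}^{\op}$. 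Splicing the three pieces together produces the desired natural equivalence
$$\Gamma(H^1(S_d)[-1])\ \zgqis\ \Gamma(\Z^{W_\bullet/*}[-1])\ \cong\ \Bin(\Z^{W_\bullet/*}[-1])\ \zgqis\ \Bin(H^1(S_d)[-1])$$
of diagrams of shape $\mathcal{C}^{\op}$, which is what we want.

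\textbf{Where the difficulty lies.} The routine parts are the invocations of Lemma~\ref{left_quillen}, Remark~\ref{torsionfree} and Proposition~\ref{bin''_gamma''}. The genuine work is Step~1: exhibiting the functorial $\Ab''$-model $d\mapsto \Z^{W_d/*}$ together with the natural zig-zat to $H^1(S_d)[-1]$, and verifying that all the structure maps land in $\Ab''$ rather than merely in $\Ab$. The subtlety is that one cannot just pick bases of $H^1(S_d)$ arbitrarily and hope for functoriality; one must extract the basis from genuinely simplicial/combinatorial data (non-degenerate edges, or edges of a functorially chosen spanning tree) so that a map of diagrams induces maps of based sets. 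If a single uniform such model is awkward, an acceptable fallback is to argue more softly: any diagram $\mathcal{C}^{\op}\to\Ab^{tf}$ taking values in free groups of finite rank is a filtered colimit / retract of diagrams coming from $\Ab''$, and both $\Bin$ and $\Gamma$ commute with the relevant colimits (Remark~\ref{torsionfree}), so the natural isomorphism propagates. I expect the clean write-up to use the non-degenerate-$1$-simplices model and to spend most of its length checking the $\Ab''$-functoriality and the quasi-isomorphism claim.
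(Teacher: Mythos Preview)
Your overall architecture is exactly the paper's: produce a functorial $\Ab''$-model of $H^1(S_d)[-1]$, apply Proposition~\ref{bin''_gamma''} to that model, and transfer back via Lemma~\ref{left_quillen} for both $\Bin$ and $\Gamma$. So the strategy is right.

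The gap is in your Step~1. Your candidate $W_d=\{\text{non-degenerate }1\text{-simplices}\}$ with $\Z^{W_d/*}[-1]$ concentrated in a single degree is not quasi-isomorphic to $H^1(S_d)[-1]$: its rank is the number of $1$-cells, not the first Betti number, so the zig-zag you describe cannot exist in general. The spanning-tree variant would fix the rank but, as you note yourself, has no chance of being functorial. Your fallback via filtered colimits of $\Ab''$-diagrams is not justified either.

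The paper's observation removes the difficulty entirely: you do not need a single-degree model. The full reduced cochain complex $\tilde C^*(S_d)$, viewed as the cosimplicial Abelian group $[n]\mapsto \Z^{(S_d)_n/*}$, is already level-wise in $\Ab''$, and all cosimplicial structure maps as well as the maps induced by $S_d\to S_{d'}$ come from maps in $\Fin_*$, hence lie in $\Ab''$. Since the cohomology is concentrated in degree~$1$, one has a natural quasi-isomorphism $\tilde C^*(S_d)\zgqis H^1(S_d)[-1]$. Now apply Proposition~\ref{bin''_gamma''} degree-wise to get $\Bin(\Z^{S_d/*})\cong\Gamma(\Z^{S_d/*})$ as cosimplicial objects, functorially in $d$, and finish with Lemma~\ref{left_quillen} exactly as you outlined. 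In short: replace your $\Z^{W_d/*}[-1]$ by $\tilde C^*(S_d)=\Z^{S_d/*}$ and the proof goes through with no further work.
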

\begin{proof}
The reduced complex $\tilde{C}^*(S_d)$ has the only non trivial cohomology group in degree
$1$, which implies a natural equivalence $\tilde{C}^*(S_d)\zgqis H^1(S_d;\mb{Z})[-1]$
functorial in $d\in \mathcal{C}^{\op}$.
On the other hand, the reduced complex $\tilde{C}^*(S_d)$ can be 
viewed as a cosimplicial complex in $\Ab''$ equal to 
$\mb{Z}^{S_d/*}$.  By the previous proposition we obtain an isomorphism
$\Bin(\mb{Z}^{S_d/*})\simeq \Gamma(\mb{Z}^{S_d/*})$, which is automatically 
functorial in $d$ because the diagram arrows induce morphisms 
$\mb{Z}^{S_d/*}\ar \mb{Z}^{S_{d'}/*}$ in cosimplicial objects in $\Ab''$.
Finally, Lemma \ref{left_quillen} provides a functorial equivalence 
$F(\mb{Z}^{S_d/*})\zgqis F(H^1(S_d)[-1])$ for $F=\Bin,\Gamma$, which concludes the proof.

\end{proof}

\smallskip

We are now ready to prove Theorem~\ref{main2}. We will do this assuming a technical result (Proposition~\ref{diagrams_of_class_spaces}) which will be proved later in Section~\ref{sect:diag_class}.
Suppose $\mathcal{C}=\square_k$ is the cubical diagram corresponding to the toric arrangement $\{T_I\subset T\}$,
and $A^*$ is the corresponding 2-term complex of shape $\mathcal{C}$ described in \ref{construction_A}.

\begin{lemma}\label{lemma:decomposition}
If all $T_I$ are path connected, then 
the diagram $A^*$ is \emph{decomposable}, 
i.e.\ there is a quasi-isomorphism $A^*\zgqis H^0(A^*)\oplus H^1(A^*)[-1]$ of diagrams. 
\end{lemma}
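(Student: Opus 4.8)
The plan is to construct directly a functorial quasi-isomorphism $A^*\zgqis H^0(A^*)\oplus H^1(A^*)[-1]$; the key idea is that a single basepoint, chosen in the smallest subtorus of the arrangement, simultaneously splits all the two-term complexes $A^*(T_I,\Z)$ in a way compatible with the restriction maps. First I would fix the data: since every $T_I$ is path connected, the intersection $T_{\{1,\dots,k\}}=\bigcap_{i=1}^k T_i$ is non-empty, so I may pick a point $x\in T_{\{1,\dots,k\}}$ and a lift $\tilde x\in\R^n$. Because $T_{\{1,\dots,k\}}\subseteq T_I$ for every $I$, the point $x$ lies in all the $T_I$. I would also arrange the tubular neighbourhoods $U_I\supseteq T_I$ underlying $A^*(T_I,\Z)$ to be nested, $U_J\subseteq U_I$ whenever $I\subseteq J$ (construct them in increasing order of $|I|$, using $T_J\subseteq T_I\subseteq U_I$); with a nested system the structure maps of the cocubical complex $I\mapsto A^*(T_I,\Z)$ are simply restriction of differential forms.

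The main step is to exhibit a functorial retraction $r=\{r_I\}\colon A^0\to H^0(A^*)=\underline{\Z}$ of the coaugmentation. For $f\in A^0(T_I,\Z)$, lift $f$ to $pr^{-1}(U_I)$: on the connected component of $pr^{-1}(U_I)$ containing $\tilde x$ one has $f=\sum_{i=1}^n a_i x_i+a_0$ with $a_0,a_1,\dots,a_n\in\Z$ by the definition of $A^0(T_I,\Z)$, and I set $r_I(f):=a_0$ (equivalently $r_I(f)=f(\tilde x)-\sum_i a_i\tilde x_i$, where the $a_i$ are the coefficients of $df$). Then $r_I$ is $\Z$-linear and $r_I(1)=1$. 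Functoriality in $I$ is the crucial point: if $I\subseteq J$, then the component of $pr^{-1}(U_J)$ through $\tilde x$ is contained in the component of $pr^{-1}(U_I)$ through $\tilde x$, and restricting $f$ from $U_I$ to $U_J$ does not change the affine expression $\sum_i a_i x_i+a_0$ read off on that component; hence $r_J(f|_{U_J})=r_I(f)$, so $r$ is indeed a morphism of diagrams splitting $\underline{\Z}\to A^0$.

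To finish, I would observe that $\ker(d\colon A^0(T_I)\to A^1(T_I))=\Z\cdot 1$, so $r_I(1)=1$ gives a direct sum decomposition $A^0(T_I)=\Z\cdot 1\oplus\ker r_I$ with $d$ restricting to an isomorphism $\ker r_I\xrightarrow{\ \sim\ }dA^0(T_I)\subseteq A^1(T_I)$. Consequently the map $\phi_I\colon A^*(T_I)\to H^0(A^*(T_I))\oplus H^1(A^*(T_I))[-1]$ equal to $f\mapsto r_I(f)$ in degree $0$ (landing in $H^0(A^*(T_I))=\Z\cdot 1$) and to the quotient $A^1(T_I)\twoheadrightarrow A^1(T_I)/dA^0(T_I)=H^1(A^*(T_I))$ in degree $1$ is a cochain map (the composite $A^0(T_I)\xrightarrow{d}A^1(T_I)\to H^1(A^*(T_I))$ vanishes) which induces isomorphisms — in fact the identity — on $H^0$ and on $H^1$. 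By functoriality of $r$ and of the maps induced on cohomology by the restrictions, the $\phi_I$ assemble into a quasi-isomorphism of diagrams inducing the identity on cohomology, giving the asserted decomposability.

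I expect the only genuinely delicate point to be the verification that $r_I$ is well defined with values in $\Z$ and is functorial — that is, that the constant term read off on the component through $\tilde x$ is an integer and is preserved by the restriction maps $A^0(T_I,\Z)\to A^0(T_J,\Z)$. Once that is in place, the passage to the direct sum decomposition is routine homological algebra, and no hypotheses beyond path connectedness of the $T_I$ are needed.
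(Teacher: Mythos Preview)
Your proof is correct and follows essentially the same approach as the paper: both arguments pick a common basepoint in $T_{\{1,\dots,k\}}$ and use it to split the coaugmentation $\Z\to A^0$ functorially (the paper first translates the arrangement so that this point is $0\in T$ and then simply evaluates at $0$, whereas you keep a general lift $\tilde x$ and read off the constant term on the component through it---these are equivalent). You spell out more details (nested tubular neighbourhoods, well-definedness of $r_I$, the degree-one part of $\phi_I$) than the paper does, but the underlying idea is identical.
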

\begin{proof}
We may assume that $0\in T_I$ for all $I$. Recall that the diagram term $A^0_I$ is the $\mb{Z}$-module of all functions constant along $T_I$ which are affine and integral with respect to the affine structure on $T$ (see equation~(\ref{eq:affine})). The evaluation at $0\in T$ splits off $H^0(A^*)\subset A^*$, 
hence provides a decomposition of $A^*$.
\end{proof}
{\bf Proof of Theorem~\ref{main2}.} 
%Consider the Abelianisation functor $\Groups\ar \Ab$.
By our assumptions, the diagram of free Abelian groups $H_1(T_I;\mb{Z})\in \Ab$
lifts to a diagram of free groups, i.e.\ there is a diagram of free groups $G_I\in \Groups^{free}$ 
provided with an isomorphism $G_I/[G_I,G_I]\simeq H_1(T_I;\mb{Z})$ functorial in $T_I$.

%Let $G_I$ be the diagram of free groups such that $G_I/[G_I,G_I]\simeq H_1(T_I;\mb{Z})$.

By Proposition~\ref{diagrams_of_class_spaces} 
there is a diagram 
$S\colon \mathcal{C}\ar \Spaces^{ft}_*$ where
$S_I$ has the homotopy type of the classifying space of $G_I$.
Then Proposition \ref{prop:gamma_bin} implies that each
$\Bin(H^1(S_I)[-1])\zgqis \Gamma(H^1(S_I)[-1])$. Note that the d\'ecalage isomorphism 
$\Gamma^k(H^1(S_I)[-1])\zgqis \Lambda^k(H^1(S_I))[-k]$
identifies $\Gamma(H^1(S_I)[-1])$ with $\MV(H^*(T_I,\Z))$.
Finally, since all $T_I$ are connected, by the above Lemma \ref{lemma:decomposition} we see that $A^*$ is decomposable.
Hence the complex $\MV(\Bin(H^1(S_I,\Z)[-1]))$ is equivalent to $\MV(H^*(T_I,\Z))$, and by
Theorem~\ref{main1} we conclude that $\MV(H^*(T_I))$ computes $H^*(T,\cup_i T_i,\Z)$.$\square$
\iffalse
The only difficulty is to show that one can choose coherent diagram 
with these spaces to be of finite type, i.e. with finitely many simplices in each degree. {\bf I realized that have no proof of this.}

\fi

\subsection{Diagrams of classifying spaces}\label{sect:diag_class}

Let $\mathcal{C}$ be the $k$-cube category $\square_k$. Let $G:\mathcal{C}\ar\mathrm{Groups}$ be a functor from $\mathcal{C}$ to groups such that each $G(I), I\in\mathcal{C}$ is a finitely generated free group.

A \emph{concrete polyhedron} is the geometric realisation of a simplicial complex. A map $f:K\ar L$ of concrete polyhedra is \emph{simplicial} if it is induced by a map of the underlying simplicial complexes, and \emph{piecewise linear}, or \emph{PL} if after subdividing every simplex of $K$ into finitely many affine simplices, the restriction of $f$ to any simplex $\triangle$ of $K$ is an affine map from $\triangle$ to some simplex of $L$. 

A \emph{polyhedral structure} on a space $X$ is a homeomorphism between $X$ and a concrete polyhedron. Two polyhedral structures on $X$ are \emph{equivalent} if both of the resulting maps between the corresponding concrete polyhedra are PL. We will refer to a space with a chosen equivalence class of polyhedral structures as a \emph{polyhedron}. 

The definition of a PL map extends to polyhedra in a straightforward way. A \emph{subpolyhedron} of a polyhedron $X$ is a subspace $X'\subset X$ equipped with a polyhedral structure such that the inclusion map $X'\ar X$ is PL. For example, open subsets of polyhedra are again subpolyhedra. One defines \emph{locally trivial PL fibre bundles} with fibre, base and total space being polyhedra by requiring that all local trivialisations should be PL.

A polyhedron $X$ is \emph{finite} if at least one (hence every) concrete polyhedron involved in the definition of the polyhedral structure on $X$ is finite. A polyhedron is \emph{locally finite} if every point of it has a closed neighbourhood which is a finite subpolyhedron.

\begin{prop}\label{diagrams_of_class_spaces}
There is a functor $\tilde{B}G(-)$ from $\mathcal{C}$ to pointed spaces, such that each $\tilde{B}G(I),I\in\mathcal{C}$ is a finite polyhedron homotopy equivalent to a classifying space for $G(I)$.
\end{prop}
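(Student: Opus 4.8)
The plan is to build the diagram $\tilde{B}G(-)$ by realising each $G(I)$ concretely as the fundamental group of a finite graph (wedge of circles), and then realising the functor structure by PL maps, using mapping cylinders to replace maps that are not honestly cellular. First I would recall that a finitely generated free group $F$ of rank $r$ is the fundamental group of the wedge $\bigvee^r S^1$, which is a finite polyhedron and a $K(F,1)$; any homomorphism $F\to F'$ between free groups is induced by a based map between the corresponding wedges, which can be taken simplicial after subdividing. The naive idea ``pick such a map for every arrow of $\square_k$'' fails to be functorial on the nose, since composites of the chosen maps will only agree with the chosen composite up to based homotopy. So the real content is rigidification of a homotopy-coherent diagram into a strict one, through finite polyhedra.

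The key steps, in order, would be: (1) For each $I\in\mathcal{C}$ fix a finite graph $X_I=\bigvee^{r_I}S^1$ with $\pi_1(X_I)\cong G(I)$. (2) Observe that $\square_k$ is a very small category — it is the poset of subsets of $\{1,\dots,k\}$ — so diagrams indexed by it can be built by induction on $|I|$: having defined $\tilde BG$ on all $J\subsetneq I$ compatibly, one needs to extend over $I$. (3) At the inductive step, one has a diagram of shape ``$I$ with all its proper subsets below it'', and $X_I$ together with a map from the homotopy colimit of the restriction to proper subsets (equivalently, a map out of each $X_J$, $J\subsetneq I$, compatible up to the already-chosen homotopies); one wants to replace $X_I$ by a homotopy-equivalent finite polyhedron $\tilde BG(I)$ that receives strict maps from all the $\tilde BG(J)$. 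The standard device is the double mapping cylinder / reduced mapping cylinder: replace $X_I$ by the mapping cylinder of the canonical map $\operatorname{hocolim}_{J\subsetneq I}\tilde BG(J)\to X_I$, which is a finite polyhedron since the index category is finite and each $\tilde BG(J)$ is a finite polyhedron, is homotopy equivalent to $X_I$ (hence still a $K(G(I),1)$ since the map induces on $\pi_1$ exactly the colimit of the $G(J)\to G(I)$, which need not be surjective — that is fine, we only need a classifying space for $G(I)$, and the mapping cylinder of a map into $X_I$ is homotopy equivalent to $X_I$ regardless), and admits the tautological inclusions from each $\tilde BG(J)$ compatible with all structure maps among the $J$'s. (4) Check that mapping cylinders and the inclusions into them can be taken in the PL category with finite polyhedra — this is standard PL topology (mapping cylinders of simplicial maps between finite simplicial complexes are finite simplicial complexes). (5) Pointedness: keep track of a basepoint throughout; the wedges are naturally based, mapping cylinders of based maps are based, so this is automatic. (6) Conclude that the resulting assignment $I\mapsto \tilde BG(I)$ is a strict functor $\mathcal{C}\to\Spaces^{ft}_*$ with each value a finite polyhedron homotopy equivalent to $BG(I)$.

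An alternative, perhaps cleaner, route: apply the nerve / classifying-space functor directly. Each $G(I)$ gives a simplicial set $BG(I)=N(G(I))$, and $I\mapsto BG(I)$ is already a strict functor $\mathcal{C}\to\Spaces_*$ because $B(-)$ is a strict functor from groups to simplicial sets. The only defect is finiteness: $BG(I)$ has infinitely many simplices in each positive degree once $G(I)$ is nontrivial. So the task reduces to: given a strict finite diagram of based simplicial sets, each homotopy equivalent to a finite polyhedron, replace it by a strict diagram of finite polyhedra preserving the homotopy types. One does this by the inductive mapping-cylinder argument above, or by invoking a general rigidification/finiteness statement; but since we want a hands-on finite model, the inductive construction over the finite poset $\square_k$ is the honest way.

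The main obstacle I expect is step (3)–(4): making the homotopy colimits and mapping cylinders genuinely finite polyhedra rather than just CW complexes or simplicial sets, while simultaneously keeping everything strictly functorial over $\square_k$ and based. Concretely, one must be careful that (a) each $\operatorname{hocolim}$ over the sub-poset of proper subsets of $I$ is a finite polyhedron — this needs the bar construction to be taken simplicially and then realised, using that $\square_k$ is finite and all spaces involved are already finite; (b) the structure maps $\tilde BG(J)\to\tilde BG(J')$ for $J\subset J'\subsetneq I$ are PL maps of finite polyhedra so that the homotopy colimit is PL; and (c) the inductive hypothesis is phrased strongly enough — not just ``a functor on proper subsets'' but ``a functor on proper subsets together with the canonical map to $X_I$'' — to let the induction proceed. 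Once the bookkeeping is set up this is routine PL topology, but it is the part that requires the care, and it is presumably why this is stated separately and deferred to Section~\ref{sect:diag_class}.
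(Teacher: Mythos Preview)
Your approach is valid in outline and genuinely different from the paper's. The paper first builds a strict but \emph{infinite} diagram $B'(G(I)) = \bigl(\prod_{I\to J} E(G(J))\bigr)\big/G(I)$ using products of universal-cover trees with the diagonal $G(I)$-action; functoriality is then automatic from the projections. It next trims each $B'(G(I))$ to a finite subpolyhedron by induction on $|I|$, invoking a cofinality lemma (finite subpolyhedra of $B'(G(I))$ that are homotopy equivalent to the whole space are cofinal among compact subsets). Your route instead builds the finite models directly by induction via mapping cylinders, which avoids the tree-bundle geometry and the cofinality lemma entirely; conversely, the paper's ambient diagram makes the inductive trimming purely mechanical, with no obstruction-theoretic input required.

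There is, however, a real gap in your step (3). You assert the existence of a map from $\operatorname{hocolim}_{J\subsetneq I}\tilde BG(J)$ to $X_I$, attributing it to ``already-chosen homotopies'', but your inductive hypothesis supplies no such homotopies: it only produces the strict diagram $\{\tilde BG(J)\}_{J\subsetneq I}$. Your suggestion in (c), to carry ``the canonical map to $X_I$'' as part of the hypothesis, cannot work as stated either, since $I$ is the new vertex being added. The fix is short but essential and you should make it explicit: set $C_I=\operatorname{colim}_{J\subsetneq I}\tilde BG(J)$ (this computes the homotopy colimit, since all structure maps are cofibrations by construction). By van Kampen, $\pi_1(C_I)\cong\operatorname{colim}_{J\subsetneq I}G(J)$, and there is a canonical homomorphism from this group to $G(I)$. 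Because $X_I$ is a $K(G(I),1)$, a based map $C_I\to X_I$ realising that homomorphism exists (unique up to based homotopy) and may be taken PL by simplicial approximation. \emph{Asphericity of $X_I$} is what makes your induction close, playing the role that the paper's product-of-trees construction plays there; it deserves to be named.
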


\begin{proof}
We proceed in two steps. Namely, we will first find a lift of $G(-)$ to locally finite polyhedra, and then we will construct a subfunctor to finite polyhedra. Choose, for each $I\in\mathcal{C}$, a based tree $E(G(I))$ with a free simplicial action of $G(I)$ on it, so that each $E(G(I))/G(I)$ becomes a classifying space for $G(I)$. Note that $E(G(I))/G(I)$ inherits a polyhedral structure from $E(G(I))$.

{\bf Step 1.} Set, for $I\in \mathcal{C}$
$$E'(G(I))=\prod_{I\ar J} E(G(J))$$ where the product is taken over all arrows in $\mathcal{C}$ starting at $I$ (including the identity of $I$). We let $G(I)$ act diagonally on $E'(G(I))$. Moreover, if $I\ar J$ is an arrow in $\mathcal{C}$, then we have a map of $G(I)$-spaces $E'(G(I))\ar E'(G(J))$. So $$I\mapsto B'(G(I)):=E'(G(I))/G(I)$$ is a functor from $\mathcal{C}$ to pointed polyhedra where the base point in each $B'(G(I))$ is induced from $E'(G(I))$.

{\bf Step 2.} Every space $B'(G(I))$ is fibred over the compact classifying space $E(G(I))/G(I)$ with fibre equal the product
$$\prod_{\alpha:I\ar J, \alpha\neq\mathrm{Id}_I} E(G(J))$$ over all maps out of $I$ except the identity. So $B'(G(I))$ is not compact except when $I=\{1,\ldots,k\}$, and we need to trim it a little. We will do this using the next lemma:
\iffalse
\begin{lemma}\label{cofinal_subpolyhedra}
Let $T_1,\ldots, T_m$ be a finite collection of locally finite trees. Suppose $F\ar X\overset{p}{\ar} B$ is a locally trivial PL fibre bundle such that the fibre $F$ is $F=T_1\times\cdots\times T_m$, the base $B$ is a finite wedge of circles, and the structure group acts diagonally of $F$ with the action on each $T_i,i=1,\ldots, m$ being by tree automorphisms. 

Then the finite subpolyhedra of $X$ which are homotopy equivalent to $X$ are cofinal in the set of all compact subspaces of $X$ ordered by inclusion.
\end{lemma}
\fi
\begin{lemma}\label{cofinal_subpolyhedra}
Let $F=T_1\times\cdots\times T_m$ be a product of locally finite trees. Suppose $B$ is a finite wedge of circles. Denote
the base point $0\in B$, and assume that $\pi_1(B,0)$ acts on $F$ by products of tree automorphisms. 
Let $F\ar X\overset{p}{\ar} B$ be the PL fibre bundle 
associated with the action. 

Then the finite subpolyhedra of $X$ which are homotopy equivalent to $X$ are cofinal in the set of all compact subspaces of $X$ ordered by inclusion.
\end{lemma}

\begin{proof} Choose an orientation on each circle in the wedge $B$, and let $g_1,\ldots,g_l$ be the corresponding system of generators of $\pi_1(B,0)$. Denote the fibre of $p$ over the base point $0\in B$ by $F_0$. 

Let $K\subset X$ be a compact subspace and
$T\subset F_0$ be a product of finite subtrees. Using parallel transfer along the natural flat connection on $X\overset{p}{\ar} B$ we can extend $T$ to a finite subpolyhedron $T'\subset X$ such that the fibre $T'\cap p^{-1}(t)$ for $t\in B\setminus \{0\}$ is homeomorphic to $T$, hence is contractible, and the fibre $T'\cap p^{-1}(0)$ is the union of $T$ and its images under $g_1,\ldots,g_l$. 
Enlarging $T$ if necessary one can assume that $T'$ contains $K$. Finally, one can choose a product of finite subtrees $S\subset p^{-1}(0)$ containing $T'\cap p^{-1}(0)$ and set $X'=T'\cup S$. By the construction, $X'\subset X$ is a finite subpolyhedron, and $X'\ar B$ is a proper map with contractible fibers, hence a homotopy equivalence.
\iffalse
If we choose each factor in $T$ large enough, then $T'\cap p^{-1}(x_0)$ will also be contractible, and $T'$ will contain $K$. The restriction of $p$ to $T'$ is proper and has contractible fibres, hence is a homotopy equivalence.
\fi
\iffalse
Let $P(B)$ be the space of all continuous maps $[0,1]\ar B$. We can pull $X$ back to $P(B)$ using either of the end points of $[0,1]$. A \emph{connection} on $X\overset{p}{\ar} B$ is an isomorphism between the resulting bundles on $P(B)$. In other words, a connection is a choice, for every path $\gamma:[0,1]\ar X$, of a homeomorphism $T_\gamma:p^{-1}(\gamma(0))\overset{\cong}\ar p^{-1}(\gamma(1))$ that depends continuously on $\gamma$. The map $T_\gamma$ is called the \emph{parallel transport} along~$\gamma$. A connection is called \emph{flat} if $T_\gamma$ does not change if we homotope $\gamma$ fixing the end points.

The bundle $X\overset{p}{\ar} B$ has a flat connection such that moreover all parallel transport maps are PL... (Attempt abandoned; it's getting too long and convoluted.)
\fi
\end{proof}

We continue with the proof of Proposition~\ref{diagrams_of_class_spaces}. We proceed by induction on $|I|$. For $I=\varnothing$ we let $B''(G(I))$ be any finite subpolyhedron of $B'(G(I))$ which is homotopy equivalent to $B'(G(I))$ and contains the base point. This is possible by Lemma~\ref{cofinal_subpolyhedra}.

Now suppose $0< m\leq k$, and that for each $I$ with $|I|<m$ we have constructed a finite subpolyhedron $B''(G(I))\subset B'(G(I))$ such that the inclusion is a homotopy equivalence, and any arrow $I_1\ar I_2, |I_1|\leq |I_2|<m$ takes $B''(G(I_1))$ to $B''(G(I_2))$. Let $I$ be such that $|I|=m$. Using Lemma~\ref{cofinal_subpolyhedra} again we define then $B''(G(I))\subset B'(G(I))$ to be a finite subpolyhedron such that it is homotopy equivalent to $B'(G(I))$ and contains the base point and the images of all arrows that end at $I$. This completes the induction step in the construction of the functor $B''(G(-))$, and also the proof of Proposition~\ref{diagrams_of_class_spaces}.
\end{proof}

\begin{remark}
Lemma~\ref{cofinal_subpolyhedra} can probably be extended to include more general spaces. For example, suppose $X$ is a polyhedron for which there is a finite subpolyhedron $X'\subset X$ such that the inclusion is a homotopy equivalence. Does it follow that finite polyhedra with this property are cofinal in the set of all compact subspaces of $X$? If the answer is no in general, will it be positive if one assumes in addition that $X$ is locally finite and/or finite dimensional? 
\end{remark}

\begin{remark}
Proposition~\ref{diagrams_of_class_spaces} can be generalised to $\mathcal{C}$ equal an arbitrary finite poset. %Not sure if we need this though. 
\end{remark}

\begin{flushleft}
Alexey Gorinov: Faculty of Mathematics, Higher School of Economics, email: \url{agorinov@hse.ru}, \url{gorinov@mccme.ru}
\end{flushleft}
\begin{flushleft}
Alexander Zakharov: Chennai Mathematical Institute, email: \url{azakharov@cmi.ac.in}
\end{flushleft}


\begin{thebibliography}{100}
\bibitem{antieau} B.~Antieau, {\it Spherical Witt vectors and integral models for spaces}, \url{https://arxiv.org/abs/2308.07288}.
\bibitem{bellissard} J.~Bellissard, {\it K-theory of $C^*$-algebras in solid states physics}, in: {\it Statistical Mechanics and Field Theory: Mathematical Aspects}, ed.\ T.~C.~Dorlas, Springer, 1986, pp.~99–156.

\bibitem{cdadmp} F.~Callegaro, M.~D’Adderio, E.~Delucchi, L.~Migliorini, R.~Pagaria, {\it Orlik-Solomon type presentations for the cohomology algebra of toric arrangements}, Transactions of the American Mathematical Society, 373, 2020, pp.~1909-1940, \url{https://arxiv.org/abs/1806.02195}. Corrigendum: Transactions of the American Mathematical Society, 374, 2021, pp.~3779-3781.

\iffalse
\bibitem{dcg} C.~De Concini, G.~Gaiffi, {\it Projective wonderful models for toric arrangements}, Advances in Mathematics, 327, 2018, pp.~390–409, \url{https://arxiv.org/abs/1608.08746}.
\fi

\bibitem{dcg1} C.~De Concini, G.~Gaiffi, {\it A differential algebra and the homotopy type of the complement of a toric arrangement}, Atti dell'Accademia Nazionale dei Lincei, Classe di Scienze Fisiche, Matematiche e Naturali, 32, 2021, 1, pp.~1–21, \url{https://arxiv.org/abs/2007.02118}.

\bibitem{cd} F.~Callegaro, E.~Delucchi, {\it Combinatorial generators for the cohomology of toric arrangements}, Topology and its Applications 355, 2024, \url{https://arxiv.org/abs/2307.04553}.
\bibitem{Dold} A.~Dold, {\it Homology of symmetric products and other functors of complexes}, Annals of Mathematics, 68, 1, 1958, pp.~54-80.
\bibitem{horel} G.~Horel, {\it Binomial rings and homotopy theory}, Journal f{\"u}r die reine und angewandte Mathematik (Crelles Journal), 813, 2024, pp.~283-305, \url{https://arxiv.org/abs/2211.02349}.
\bibitem{ekedahl} T.~Ekedahl, {\it On minimal models in integral homotopy theory}, Homology, Homotopy and Applications, 4, 2, 2002, pp.~191-218, \url{https://arxiv.org/abs/math/0107004}.
\bibitem{elliot} J.~Elliot, {\it Binomial rings, integer-valued polynomials, and $\lambda$-rings}, Journal of Pure and Applied Algebra, 207, 1, 2006, pp.~165-185.
\bibitem{Illusie} L.~Illusie, {\it Complexe cotangent et d\'eformations {I}}, Lecture Notes in Mathematics, 239, Springer, 1971.
\bibitem{kalugin} P.~Kalugin, {\it Cohomology of quasiperiodic patterns and matching
rules}, Journal of Physics A: Mathematical and General, 38, 2005,  pp.~3115–3132. 
\bibitem{ksz} D.~Kubrak, G.~Shuklin, A.~Zakharov, 
{\it Derived binomial rings I: integral Betti cohomology of log schemes}, 2023,
\url{https://arxiv.org/pdf/2308.01110}.
\bibitem{mandell} M.~Mandell {\it Cochains and homotopy type},
Publications Math{\'e}matiques de l'Institut des Hautes {\'E}tudes Scientifiques, 103, 2006, pp.~213-246
\bibitem{moci_pagaria} L.~Moci, R.~Pagaria, {\it On the cohomology of arrangements of subtori}, Journal of the London Mathematical Society, 106, 3, 2022, pp.~1999-2029, \url{https://arxiv.org/abs/2001.05180}
\bibitem{sasha_arxiv} A.~Zakharov, {\it Rational Homotopy Type of Complements of Submanifold Arrangements}, \url{https://arxiv.org/abs/2211.05033}.

\bibitem{suciu} R.~Porter, A.~Suciu, {\it Cup-one algebras and 1-minimal models}, \url{https://arxiv.org/abs/2306.11849}, 2023.

\bibitem{toen} B.~To{\"e}n, {\it Champs affines}, 
Selecta mathematica, 12, 2006, pp.~39-134
\iffalse
@article{suciu2023cup,
  title={Cup-one algebras and 1-minimal models},
  author={Porter, Richard D and Suciu, Alexander I},
  journal={arXiv preprint arXiv:2306.11849},
  year={2023}
}
\fi


\end{thebibliography}
\end{document}